\documentclass[12pt]{article}

\usepackage{graphicx} 
\usepackage{amsthm}
\usepackage{amsmath}
\usepackage{amssymb}
\usepackage{verbatim}

\newtheorem{theorem}{Theorem}[section]
\newtheorem{lemma}[theorem]{Lemma}
\newtheorem{proposition}[theorem]{Propostition}
\newtheorem{corollary}[theorem]{Corollary}
\newtheorem{definition}[theorem]{Definition}
\newtheorem{remark}[theorem]{Remark}
\newtheorem{example}[theorem]{Example}

\newcommand{\R}{\mathbb{R}}
\newcommand{\N}{\mathbb{N}}
\renewcommand{\dh}{\dim_{\rm {H}}}
\newcommand{\db}{\dim_{\rm {B}}}
\newcommand{\dlb}{{\underline\dim}_{\rm {B}}}
\newcommand{\dub}{{\overline\dim}_{\rm {B}}}
\newcommand{\dui}{{\overline{\dim}}_{\theta}}

\newcommand{\das}{\dim_{\rm {A}}}

\newcommand{\M}{\mathcal{M}}
\newcommand{\dimup}{\overline{\dim}_{\mathrm B}}
\newcommand{\Cs}[2]{C_{#1}^{\,#2}}
\newcommand{\phirs}[2]{\phi_{#1}^{#2}}
\newcommand{\supp}{\operatorname{supp}}

\newcommand{\be}{\begin{equation}}
\newcommand{\ee}{\end{equation}}

\title{The sequence property for fractal dimensions}
\author{Kenneth Falconer and Yuyang Liu}
\date{}

\begin{document}

\maketitle

\begin{abstract}
For various definitions of fractal dimension that are finitely stable, including upper box dimension, upper intermediate dimensions and Assouad spectra, we show that, given a compact subset $E$ of a metric space, typically $\R^n$,  there is a convergent sequence of points contained in $E$ of the same dimension as $E$ itself. Moreover, under certain conditions it is possible for a sequence to witness the dimension of $E$ for many definitions of dimension simultaneously, for example in a self-affine set $E$ there is a single convergent sequence that has the same Assouad spectrum or intermediate dimension values as $E$ itself.
\end{abstract}

\section{Introduction}
\setcounter{equation}{0}
\setcounter{theorem}{0}

There are now many definitions of fractal dimension in use which may assign different values of dimension to a given set reflecting different features of the set, see for example \cite{Fal,MO,Spe}.  Given some definition `$\dim$' of a fractal dimension, a natural question to ask is, given a set $E$ in a metric space, can one always find a convergent sequence $(a_k)\subset E$ that `witnesses' the dimension of $E$, that is with the set of points of $(a_k)$ comprising a set of the same dimension as $E$ itself?

For Hausdorff dimension the answer is immediately negative since every countable set has Hausdorff dimension zero. By contrast, upper box dimension is more sensitive to point sets, and a countable set may have positive upper box dimension. 

Here we study these questions for several definitions of dimension: upper box dimension, Assouad dimension and the Assouad spectrum, upper intermediate dimensions and dimension profiles. These dimensions share two basic properties which play a central role throughout the paper: monotonicity and finite stability. These properties allow one to `localise' the dimension of a compact set.
 
In the final section we examine conditions under which a single sequence can witness several or many definitions of dimension simultaneously. For example, for  upper intermediate dimension $\overline{\dim}_{\theta}$ and $E\subset \R^n$ a self-affine set, there is a sequence $(a_k) \subset E$ such that $\overline{\dim}_{\theta}(a_k)= \overline{\dim}_{\theta}E$ for all $\theta\in (0,1]$. 

We follow a fairly consistent approach to constructing these sequences for different dimensions, using `dimension points' $x$ for which the dimension of a set is realised in all small neighbourhoods of $x$. We use the particular definition of the dimension to find point sets inside a family of balls of decreasing radii centred at $x$ which behave appropriately at decreasing scales and then take the union of these point sets to get a sequence of the required dimension. 

\section{Properties of general dimensions}
\setcounter{equation}{0}
\setcounter{theorem}{0}

In this section we note some general properties that dimensions may have and derive  a simple `localisation' lemma that exhibits a point $x \in E$ such that every ball centred at $x$ intersects $E$ in a set of full dimension. Such an $x$ is typically the limit point of the sequences which we seek.                                        

We work in a (non-empty) metric space $(X,d)$ and consider dimension functions $\dim: \mathcal{P}(X)\setminus \varnothing \to [0,\infty]$, where $\mathcal{P}(X)$ is the power set of $X$. In some settings we just need  $\dim$ to be defined on a subclass of $\mathcal{P}(X)$, for example the bounded sets or perhaps Borel or analytic sets. 

There are a number of natural properties that a dimension function $\dim$ may or may not have. In particular for subsets of a metric space,
\medskip

(1)  \textbf{Monotonicity:} $\dim$ is \textit{monotone} if $A\subset B$ implies that $\dim A \leq \dim B$,
\medskip

(2)    \textbf{Finite sets:} $\dim$ has the  \textit{finite sets} property if $\dim A =0$ whenever $A$ is finite,
\medskip

(3)    \textbf{Finite stability:} $\dim$ is  \textit{finitely stable} if $\dim(A\cup B) = \max \{\dim A,\dim B\}$,
\medskip

(4)    \textbf{Countable stability:} $\dim$ is  \textit{countably stable} if $\dim(\bigcup_i A_i) = \sup_i \dim A_i$ for all finite or countable collections of sets $\{A_i\}_i$.
\medskip

\noindent Note that if a dimension is finitely stable and has the finite sets property then the dimension of a set is unchanged on adding or removing a finite set of points. In particular:
\smallskip
\be\label{findiff}
  \qquad \text{ (i)} \dim (A\cup F) = \max\{\dim A, \dim F\} =  \dim A\quad\text{ if } F\text { is finite, }
  \ee
  $$
 \text{ (ii) For a sequence } \dim (a_k)_{k=1}^\infty = \dim (a_k)_{k=k_0}^\infty\quad\text{  for all }k_0 \in \mathbb{N}.
$$

These are the properties with which we will mainly be concerned here; there are other frequently considered attributes, such as Lipschitz or bi-Lipschitz stability and the open interior property. Most well-known dimensions, except lower dimension and its relatives, are monotonic and have the finite set property. Most are finitely stable, though lower box dimension is not. However, whilst Hausdorff and packing dimensions are countably stable, many other definitions of dimension fail to be so, including upper box dimension, intermediate dimensions, Assouad dimension and related definitions, see \cite{Fal,FZ,Fra} for discussions of these dimensions and their basic properties. 

To avoid repetition, we will assume that {\it all dimensions referred to in this paper are monotonic and have the finite sets property}.

We consider a further property that a dimension may or may not have: the sequence property holds if every set $E \subset X$ contains a convergent sequence of dimension equal to that of $E$ itself. Note that when we write $(a_k)$ for a sequence we assume that the index $k$ runs through the natural numbers $\mathbb{N}$. We also write  $(a_k) \subset E$ to mean the set $\{a_k: k\in \mathbb{N}\} \subset E$.
\medskip

\begin{definition} {\rm{\bf Sequence property}} A dimension $\dim$ on a metric space $(X,d)$ satisfies the \emph{sequence property}  if for every non-empty compact $E\subset X$ with $\dim E >0$ there is an $x\in E$ and a convergent sequence $(a_k) \subset E$ with $a_k \to x$ such that  $\dim(a_k)= \dim E$.
\end{definition} 

For subsets of $\R$ we can say a little more. 

\begin{lemma}\label{monotoneR}
Let $E\subset\mathbb{R}$ be compact with $\dim E >0$. If $\dim$  is finitely stable and satisfies the sequence property then 
 there exists a strictly increasing or a strictly decreasing convergent sequence $(b_k)\subset E$
such that
$\dim(b_k)=\dim E$.
\end{lemma}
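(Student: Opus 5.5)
By the sequence property there is an $x\in E$ and a sequence $(a_k)\subset E$ with $a_k\to x$ and $\dim(a_k)=\dim E>0$. I would split the set $A=\{a_k:k\in\N\}$ into three pieces, $A^-=A\cap(-\infty,x)$, $A^+=A\cap(x,\infty)$ and $A\cap\{x\}$. Then $A=A^-\cup A^+\cup(A\cap\{x\})$. The last piece is finite, so by finite stability and (\ref{findiff}) we get $\dim A=\max\{\dim A^-,\dim A^+\}$. Choose the side, say $A^+$, with $\dim A^+=\dim E$; the case $A^-$ is symmetric.

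Since $\dim A^+=\dim E>0$ and finite sets have dimension $0$, the set $A^+$ is infinite. Every point of $A^+$ is a term $a_k$ with $a_k>x$, and $a_k\to x$. Hence for each $\epsilon>0$ only finitely many points of $A^+$ lie in $[x+\epsilon,\infty)$. It follows that $A^+$ has no accumulation point other than $x$, that $\inf A^+=x$, and that $x\notin A^+$.

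Now I enumerate $A^+$ in strictly decreasing order. Put $b_1=\max A^+$; the maximum exists because $A^+\cap[x+\epsilon,\infty)$ is finite and non-empty for small $\epsilon$. Inductively let $b_{j+1}=\max\bigl(A^+\cap(-\infty,b_j)\bigr)$. This maximum also exists: the set is non-empty because $A^+$ is infinite and only finitely many of its points lie above $b_j$, and its points above $(b_j+x)/2$, or above any level strictly exceeding $x$, form a finite set. The $b_j$ are strictly decreasing. Every point $a\in A^+$ appears among them, since only finitely many points of $A^+$ exceed $a$, so $a$ is reached after finitely many steps. Hence $\{b_j\}=A^+$ exactly, and $b_j\to x$ because each $b_j$ eventually lies below $x+\epsilon$.

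Finally, $(b_j)\subset E$ and $\dim(b_j)=\dim A^+=\dim E$. The only step needing any care is the ordering argument, namely that $A^+$ is well-ordered by $>$ with order type $\N$. This rests on its points accumulating only at $x$, which follows from $a_k\to x$. Finite stability is used only to discard the possible point $x$ and to split $A$ into its two sides.
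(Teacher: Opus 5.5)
Your proof is correct and follows the same route as the paper: finite stability lets you pass to the points of the sequence on one side of the limit $x$, and you then list those points monotonically by repeatedly taking the extreme remaining point. Your version is slightly more careful than the paper's. Excluding $x$ from $A^{\pm}$ makes $\{b_j\}$ equal to $A^+$ exactly, and you check explicitly that $A^+$ is infinite, which the paper leaves implicit.
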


\begin{proof}
By the sequence property, there is a convergent sequence $(a_k)\subset E$ with $a_k \to x\in E$ and
$\dim(a_k)=\dim E$. By finite stability either  $\dim((a_k)\cap (-\infty, x])=\dim E$ or $\dim((a_k)\cap [x, \infty))=\dim E$. In the former case, inductively set $b_k=\min \{a_i: a_i \neq b_j \text{ for }  1\leq j \leq k-1\}$,
so $(b_k)$ increases strictly and converges to $x$, and $\dim (b_k) =\dim(a_k)\cap (-\infty, x])=\dim E$. The latter case is similar.
\end{proof}

An obvious observation is that the sequence property fails for dimensions that are countably stable.

\begin{remark}
Let $\dim$ be a definition of dimension that is countably stable with the finite sets property. Then $\dim$ does not have the sequence property, since $\dim(a_k) = 0$ for every sequence $(a_k)$. In particular, Hausdorff and packing dimension do not have the sequence property.
\end{remark}

The sequence property also typically fails for dimensions that are not finitely stable.

\begin{remark}
Let $\dim$ be a dimension satisfying \eqref{findiff} and suppose there are disjoint compact sets $E_1,E_2\subset X$ such that 
\be\label{notfinstab}
\max \{\dim E_1, \dim E_2\}<\dim(E_1\cup E_2).
\ee
Then the sequence property fails by considering $E = E_1\cup E_2$. For since $E_1$ and $E_2$ are separated by a positive distance, any sequence $(a_k)$ in $E$ with $a_k\to x\in E$  must, for some $k_0$,  satisfy $a_k \in E_i$  for all $k \geq k_0$ either for $i=1$ or $i=2$.
Thus $\dim (a_k)_{k=1}^\infty = \dim (a_k)_{k=k_0}^\infty\leq \dim E_i<\dim E$.
\end{remark}

For example, lower box dimension does not satisfy the sequence property. It is not finitely stable, since disjoint compact sets $E_1,E_2\subset \mathbb{R}$ may be constructed satisfying  \eqref{notfinstab}, see for example, \cite[Proposition 5.5(ii)]{Ban}, but it does satisfy \eqref{findiff}.

Throughout, $B(x,r)$ will denote the closed ball of centre $x$ and radius $r$ with $B^O(x,r)$ the corresponding open ball. For $E\subset X$ we write $\# E$ for the \emph{cardinality} of $E$ and $|E| = \sup_{x,y\in E} d(x,y)$ for the \emph{diameter} of $E$.

In the forthcoming sections, we will establish that various definitions of dimensions have the sequence property. The following  simple localisation property is key to our arguments. 

\begin{lemma}\label{fullballpoint}
Let $(X,d)$ be a metric space and let $E\subset X$ be compact. Let $\dim$ be a finitely stable dimension on subsets of X. Then there exists a point $x\in E$ such that
for all $r>0$,
\be\label{dimpt}
\dim (E\cap B(x,r))= \dim E.  
\ee
\end{lemma}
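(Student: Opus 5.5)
We argue by contradiction, using compactness of $E$ together with finite stability. Suppose no point of $E$ satisfies \eqref{dimpt}. Then for every $x\in E$ there is some $r_x>0$ with $\dim(E\cap B(x,r_x))<\dim E$, where we use monotonicity to see that $\dim(E\cap B(x,r))\le \dim E$ always holds. The open balls $B^O(x,r_x)$, $x\in E$, cover $E$. By compactness there are finitely many points $x_1,\dots,x_m\in E$ with
\[
E\subset \bigcup_{i=1}^m B^O(x_i,r_{x_i}).
\]

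We then write $E=\bigcup_{i=1}^m \bigl(E\cap B(x_i,r_{x_i})\bigr)$, noting that the open balls are contained in the closed balls. Applying finite stability $m-1$ times, by induction on the number of sets, gives
\[
\dim E=\max_{1\le i\le m}\dim\bigl(E\cap B(x_i,r_{x_i})\bigr)<\dim E,
\]
which is a contradiction. Hence some $x\in E$ satisfies \eqref{dimpt} for all $r>0$.

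Two small points need attention. First, $E$ must be non-empty, which is the standing assumption since $\dim$ is defined on non-empty sets. Second, some sets $E\cap B(x_i,r_{x_i})$ are non-empty simply because they contain $x_i$, so the dimension is defined on each of them.

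The only delicate step is the extension of finite stability from two sets to finitely many, and it is routine. The case $\dim E=\infty$ causes no trouble because the strict inequality is still preserved by a finite maximum. There is no real obstacle: compactness is exactly what turns local deficiency into a global contradiction via finite stability.
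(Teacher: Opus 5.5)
Your proof is correct and is essentially the paper's argument. You assume every point has a radius at which the dimension drops, extract a finite subcover of $E$ by open balls using compactness, and contradict finite stability. Your extra remarks are fine: monotonicity giving the strict inequality, induction to extend stability to finitely many sets, and the case $\dim E=\infty$. One small slip: \emph{every} set $E\cap B(x_i,r_{x_i})$ is non-empty because it contains $x_i$, not just some of them.
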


\begin{proof}
Suppose that for all $x\in E$ there exists
$r_x>0$ with 
\begin{equation} \label{eq1}
\dim(E\cap B(x,r_x))<\dim E.
\end{equation}
The family $\{B^O(x,r_x):x\in E\}$ is an open cover of $E$, so by compactness there exist finitely many points
$x_1,\dots,x_m\in E$ such that
\[
E\ = \ E \cap\bigcup_{i=1}^m \bigl(B(x_i,r_{x_i})\bigr)=\bigcup_{i=1}^m \bigl(E\cap B(x_i,r_{x_i})\bigr).
\]
By finite stability and \eqref{eq1},
\[
\dim E=\max_{1\le i\le m}\dim\bigl(E\cap B(x_i,r_{x_i})\bigr)<\dim E,
\]
a contradiction that yields the result.
\end{proof}

We call a point $x\in E$ satisfying \eqref{dimpt} a {\it dimension point of }$E$. Thus for a finitely stable dimension, every compact set has at least one dimension point.
\medskip

A simple argument verifies the sequence property for upper box dimension in a metric space in Section 3 and for Assouad dimension and the Assouad spectrum in Section 4. In Section 5 we establish the sequence property for intermediate dimensions in locally compact metric spaces, via a continuity property of intermediate pre-measures. In Section 6 we use capacities to obtain results for dimension profiles in $\R^n$ which are closely related to projection properties. In Section 7 we consider when a single sequence can witness a range of dimension definitions simultaneously and show that this is so for many familiar fractals.

\section{Box dimensions}
\setcounter{equation}{0}
\setcounter{theorem}{0}

We recall the well-known definitions of the box dimensions.

\begin{definition}
Let $(X,d)$ be a metric space and let $E\subset X$ be bounded. For $r>0$ let
\be\label{box1}
N_r (E)=\{\text{\rm the least number of sets with diameters at most $r$ that can cover $E$}\}.
\ee
The \emph{lower} and \emph{upper box} or \emph{box-counting dimensions} of $E$ are given by 
 \be\label{lbox}
 \dlb E=\liminf_{r\searrow 0}\frac{\log N_r (E)}{\log(1/r)}
\ee
and
 \be\label{ubox}
 \dub E=\limsup_{r\searrow 0}\frac{\log N_r (E)}{\log(1/r)}
\ee
If $\dlb E= \dub E$ we term the common value the \emph{box dimension} $\db E$ of $E$.
\end{definition}

The values in \eqref{lbox} and \eqref{ubox} are unchanged for a number of alternative  definitions of $N_r (E)$ in \eqref{box1}, see \cite[Chapter 2]{Fal}. Particularly relevant here is the equivalent definition obtained by taking  
\begin{align}\label{box2}
N_r (E)=\{&\text{the greatest  number of points in any $r$-separated set }\nonumber \\
&\text{$S\subset E$, i.e. where $d(x,y)\geq r$ for all distinct $x,y\in S$} \}.
\end{align}

\subsection{The sequence property for upper box dimension}

We use Lemma \ref{fullballpoint} to show that upper box dimension has the sequence property. This was established in \cite{Iva} but we include a short proof here for completeness and to illustrate a more general method.

\begin{theorem}\label{seqpropbd}
Upper box dimension $\dub$ has the sequence property.
\end{theorem}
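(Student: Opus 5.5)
Let $E$ be compact with $s=\dub E>0$. Upper box dimension is monotone, has the finite sets property and is finitely stable, so Lemma \ref{fullballpoint} gives a dimension point $x\in E$: $\dub(E\cap B(x,r))=s$ for every $r>0$. The plan is to build, inside a nested family of balls $B(x,r_m)$ with $r_m\searrow 0$, finite separated point sets $S_m$ realising almost the full box-counting exponent at some scale $\delta_m$. We then take $(a_k)$ to be an enumeration of $\bigcup_m S_m$, together with $x$ if needed.

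Fix a sequence $s_m\nearrow s$ with $s_m<s$; if $s=\infty$, take $s_m=m$. We proceed inductively. Given $r_m$ (say $r_1=1$), since $\dub(E\cap B(x,r_m))=s$ we can choose $\delta_m$ with
\[
0<\delta_m<\min\{\delta_{m-1}^2,\, r_m\} \quad\text{and}\quad N_{\delta_m}(E\cap B(x,r_m))\ge \delta_m^{-s_m}.
\]
Using the separated-set form \eqref{box2}, we obtain a $\delta_m$-separated set $S_m\subset E\cap B(x,r_m)$ with $\#S_m\ge\delta_m^{-s_m}$. Next we put $r_{m+1}=\min\{r_m/2,\delta_m\}$, so $r_m\to0$.

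Let $A=\bigcup_m S_m$, enumerated as a sequence. Every neighbourhood of $x$ contains all but finitely many $S_m$, and each $S_m$ is finite. So any enumeration of $A$ has only finitely many terms outside any ball about $x$, hence $a_k\to x$. Also $A$ is infinite once $s>0$, because $\#S_m\to\infty$; if needed, repetitions can be removed or $x$ appended without affecting dimension, by \eqref{findiff}.

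For the dimension, monotonicity gives $\dub A\le\dub E=s$. Conversely, $S_m\subset A$ is $\delta_m$-separated, so by \eqref{box2} we have $N_{\delta_m}(A)\ge\#S_m\ge\delta_m^{-s_m}$. Since $\delta_m\to0$, this gives
\[
\dub A\ \ge\ \limsup_{m}\frac{\log N_{\delta_m}(A)}{-\log\delta_m}\ \ge\ \lim_m s_m = s.
\]

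There is no real obstacle. The only points needing care are these:
\begin{itemize}
\item Using the separated-set characterisation, so that the lower bound transfers from $E\cap B(x,r_m)$ to the subset $S_m$ and hence to $A$. A cover count would not pass to a subset in this direction.
\item Arranging $r_m\to0$ so that the union is genuinely a convergent sequence.
\end{itemize}
The dimension point supplied by Lemma \ref{fullballpoint} is exactly what guarantees that arbitrarily small balls still carry dimension $s$.
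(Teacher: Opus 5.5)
Your proof is correct and follows essentially the same route as the paper. Both arguments take a dimension point $x$, choose large $\delta_m$-separated subsets of $E\cap B(x,r_m)$ with $r_m\to 0$, enumerate their union, and use the separated-set form of $N_r$ to transfer the lower bound to the sequence. Your extra constraints $\delta_m<\delta_{m-1}^2$ and $r_{m+1}\le\delta_m$ are harmless but unnecessary; the paper simply uses radii $2^{-n}$.
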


\begin{proof}
Let $E$ be a compact subset of a metric space $(X,d)$ with $\dub E=s>0$. We use the definition \eqref{ubox} of  $\dub$ taking $N_r$ as in \eqref{box2}.

We choose a sequence of scales and find large separated sets at each scale. 
Let  $x\in E$ be any dimension point as in Lemma \ref{fullballpoint}, so that
$\overline{\dim}_{\mathrm B}(E\cap B(x,\rho))=s$ for all $\rho>0$.
Set  $E_n=E\cap B(x,2^{-n})$, so $\overline{\dim}_{\mathrm B}E_n=s$ for all $n$.

First assume that $\dub E< \infty$. For each $n$ we may choose $r_n\in(0,2^{-n})$ such that
\be\label{sminus}
\frac{\log N_{r_n}(E_n)}{\log(1/r_n)}\ge s-\frac1n.
\ee
Pick an $r_n$-separated set $P_n\subset E_n$ of maximal cardinality $\# P_n=N_{r_n}(E_n)$.
Enumerate $\bigcup_{n=1}^\infty (P_n\setminus \{x\}) \subset E$ as a sequence $(a_k)$; 
since the finite set $P_n\subset B(x,2^{-n})$ it follows that $a_k\to x$. Then
$P_n\subset (a_k)\cup \{x\}$, so
\[
N_{r_n}((a_k)\cup \{x\})\ge \# P_n=N_{r_n}(E_n).
\]
Using the finite sets property of  $\dub$, 
$$
\overline{\dim}_{\mathrm B}(a_k)
=\overline{\dim}_{\mathrm B}((a_k)\cup \{x\})
=\limsup_{r\searrow 0}\frac{\log N_r ((a_k)\cup \{x\})}{\log(1/r)}
\ge \limsup_{n\to\infty}\frac{\log N_{r_n}(E_n)}{\log(1/r_n)}
\ge s,
$$The opposite inequality $\dub(a_k)\le \db E =s$ is immediate from monotonicity.

If $\dub E= \infty$ the proof is similar but with the right-hand side of \eqref{sminus} replaced by $n$.
\end{proof}

Note that in Theorem \ref{seqpropbd}, we may find a suitable sequence converging to any dimension point of $E$.

By Lemma \ref{monotoneR}, for compact subsets $E\subset \R$, we may find a monotonic convergent sequence in $E$ of upper box dimension equal to that of $E$.

\section{Assouad dimension and the Assouad spectrum}
\setcounter{equation}{0}
\setcounter{theorem}{0}

Assouad dimension might be thought of as a `local' version of box-dimension in that it depends on the  parts of a set where the box dimension is locally large. Thus the Assouad dimension of a set $E$ reflects the number of balls of radius $r$ required to cover portions of $E$ of the form $E\cap B(x,R)$ where $0<r<R$. This dimension was introduced in the late 1970s by Patrice Assouad \cite{Ass} as a tool for studying  Lipschitz-embeddability of sets in Euclidean space. From around 2010 Assouad dimension, and variants such as the Assouad spectrum, have been studied in their own right, see \cite{Fra} for a comprehensive account.

\begin{definition}\label{assdef}
Let $(X,d)$ be a metric space and let $E\subset X$ be bounded. Let $N_r(F)$ denote the minimum number of closed balls of radius $r$
needed to cover $F$.
The \emph{Assouad dimension} of $E$ is given by
\begin{align}
\das E=\inf\Big\{& s\ge 0: \text{\rm there exists } C> 0\ \text{\rm such that for all }
0<r<R \nonumber\\
& \text{\rm and } x\in X, \ 
N_{r}\bigl(E\cap B(x,R)\bigr)\le C\Bigl(\frac{R}{r}\Bigr)^s\Big\}.\label{AsIneq}
\end{align}
\end{definition}

As with box-counting dimension, the definition of Assouad dimension is unchanged for various alternative definitions of  $N_r$, including setting $N_r(F)$ to be the maximum cardinality of any $r$-separated subset of $F$, see \cite[Section 2.1]{Fra}. 

We give a short verification of the sequence property for Assouad dimension. For related results on the existence of subsets of smaller Assouad dimension see \cite{CWW}.

\begin{theorem}\label{assseq}
Assouad dimension $\das$ has the sequence property.
\end{theorem}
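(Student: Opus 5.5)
We follow the same scheme as in the proof of Theorem \ref{seqpropbd}. Assouad dimension is monotone, has the finite sets property and is finitely stable, so Lemma \ref{fullballpoint} applies. Let $E$ be compact with $\das E=s>0$ and let $x$ be a dimension point of $E$. Put $E_n=E\cap B(x,2^{-n})$, so that $\das E_n=s$ for every $n$. Throughout we use the version of $N_r$ given by the maximal cardinality of an $r$-separated subset; this is legitimate because, as noted after Definition \ref{assdef}, it gives the same dimension.

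First suppose $s<\infty$. Since $\das E_n=s>s-1/n$, the defining inequality \eqref{AsIneq} with exponent $s-1/n$ fails for every constant $C$, and in particular for $C=n$. So there exist $y_n\in X$ and $0<r_n<R_n$ such that $E_n\cap B(y_n,R_n)$ contains an $r_n$-separated set $P_n$ with
\[
\# P_n> n\Bigl(\frac{R_n}{r_n}\Bigr)^{s-1/n}.
\]
The set $P_n$ is finite and $P_n\subset E_n\subset B(x,2^{-n})$. Enumerate $\bigcup_n (P_n\setminus\{x\})$ as a sequence $(a_k)$. Each ball $B(x,2^{-n})$ excludes only finitely many of the points, so $a_k\to x$. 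The sequence may in fact be finite, or contain repetitions, but either case is harmless (see the last paragraph).

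Now fix $t<s$ and any $C>0$. For all $n$ with $s-1/n>t$ and $n>C$, the set $(a_k)\cup\{x\}$ meets $B(y_n,R_n)$ in a set containing $P_n$. Since $R_n/r_n>1$,
\[
N_{r_n}\bigl(((a_k)\cup\{x\})\cap B(y_n,R_n)\bigr)\ge \# P_n> n\Bigl(\frac{R_n}{r_n}\Bigr)^{t}>C\Bigl(\frac{R_n}{r_n}\Bigr)^{t}.
\]
Hence no constant works for exponent $t$, so $\das((a_k)\cup\{x\})\ge t$ for every $t<s$. By the finite sets property together with finite stability, $\das(a_k)\ge s$. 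The reverse inequality $\das(a_k)\le s$ follows from monotonicity.

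If $s=\infty$ we argue in the same way, replacing the exponent $s-1/n$ by $n$.

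The main point to check is the move to the separated-set version of $N_r$: the covering and packing versions differ only by a constant in the scale, which does not change the dimension. We should also note that the witnessing scales $R_n$ need not tend to $0$. This causes no difficulty, because the Assouad dimension of the countable set is forced up by its finite pieces $P_n$ alone, uniformly in the constant $C$. Finally, if only finitely many $P_n$ were distinct, the union would be finite and would have dimension $0$. This cannot happen: $\#P_n\to\infty$, since $R_n/r_n\ge 1$ and $n\to\infty$.
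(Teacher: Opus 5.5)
Your proof is correct and follows essentially the same route as the paper's. Both use a dimension point $x$ and the sets $E_n=E\cap B(x,2^{-n})$, the failure of the Assouad inequality for $E_n$ with exponent $s-1/n$ and constant $C=n$, maximal $r_n$-separated sets $P_n$, and an enumeration of $\bigcup_n(P_n\setminus\{x\})$. The only differences are cosmetic: you correctly let the centres $y_n$ lie in $X$ rather than in $E_n$, and the paper starts at $n>1/s$, which avoids negative exponents $s-1/n$ (where the required failure holds trivially in any case).
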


\begin{proof}
Let $(X,d)$ be a metric space and let $E\subset X$ be compact with $s= \das E>0$.
Since $\das  (a_k) \le \das E$ for every sequence $(a_k) $ in $E$ by monotonicity, it suffices to find a convergent sequence in $E$ such that $\das (a_k) \geq \das E$.

Assouad dimension is finitely stable so  Lemma~\ref{fullballpoint} gives an $x\in E$ such that $\das (E\cap B(x,\rho))=\das E=s$
for all $\rho>0$. Let $E_{n}=E\cap B(x,2^{-n})$, so $\das E_{n} =s$.

If $\das E <\infty$, for each $n> 1/s$ the defining inequality for Assouad dimension in \eqref{AsIneq} fails for $E_{n}$ and exponent $s-1/n$
and for every constant $C>0$. Thus, choosing $C=n$,  we can find $x_n\in E_n$ and $0<r_n<R_n$  such that
\begin{equation}\label{enbad}
N_{r_n}\bigl(E_n\cap B(x_n,R_n)\bigr)\ \ge\ n\Big(\frac{R_n}{r_n}\Big)^{s-1/n},
\end{equation}
where throughout this proof we take $N_r(F)$ to be the maximum number of points of any $r$-separated subset of $F$.
Choose an $r_n$-separated  set $P_n\subset E_n\cap B(x_n,R_n)$ with
\begin{equation}\label{defPn}
\#P_n=N_{r_n}\bigl(E_n\cap B(x_n,R_n)\bigr).
\end{equation}
 Enumerate the set $\bigcup_{n=\lceil 1/s\rceil}^\infty (P_n \setminus \{x\}) \subset\ E$ as a sequence $(a_k)$; since  $P_n\subset B(x,2^{-n})$ we conclude that $a_k\to x$.

To check that $\das ( (a_k) \cup \{x\}) =s$, let $\alpha<s$ and $C\ge 1$ be arbitrary. Choose $n$ so large that $\alpha<s-1/n$ and $n>C$.
Using \eqref{enbad}--\eqref{defPn}, we obtain
\[
N_{r_n}\bigl(( (a_k) \cup \{x\}) \cap B(x_n,R_n)\bigr)\ \ge\ N_{r_n}(P_n)= \#P_n
\ \ge\ n\Big(\frac{R_n}{r_n}\Big)^{s-1/n}
> C\Big(\frac{R_n}{r_n}\Big)^{\alpha}.
\]
Thus the Assouad covering inequality fails for $(a_k) \cup \{x\}$  taking exponent $\alpha$ and constant $C$, hence
$\das (a_k)=\das ((a_k) \cup \{x\})\ge \alpha$. Since this is true for all $\alpha<s$, we conclude that $\das (a_k) =s$.

The argument is similar when $\das E =\infty$, where in \eqref{enbad} and thereafter we replace $s-1/n$ by $n$.
\end{proof}

Closely related to Assouad dimension is the Assouad spectrum. This was introduced to provide insight into which pairs of scales $r<R$ in \eqref{AsIneq}  give rise to the extreme behaviour singled out by the Assouad dimension. Here, for  given $\vartheta\in(0,1)$, only balls of radii $R^{1/\vartheta}$ are considered in coverings of  $E\cap B(x,R)$. The Assouad spectrum is continuous for $\vartheta\in(0,1)$ and provides a `spectrum of dimensions' that interpolates between upper box dimension and Assouad dimension, see \cite{Fra,FY} .

\begin{definition}\label{assspec}
Let $(X,d)$ be a metric space and let $E\subset X$ be bounded. Let $N_r(F)$ denote the minimal number of closed balls of radius $r$
needed to cover $F$.  The \emph{Assouad spectrum} of $E$ at $\vartheta$, where $\vartheta\in(0,1)$, is defined by
\begin{align*}
\das^\vartheta E=\inf\Big\{& s\ge 0: \text{\rm there exists } C> 0\ \text{\rm such that for all }
0<R<1 \nonumber\\
& \text{\rm and } x\in E, \ 
N_{R^{1/\vartheta}}\bigl(E\cap B(x,R)\bigl) \le C\Bigl(\frac{R}{R^{1/\vartheta}}\Bigr)^s\Bigr\}.
\end{align*}
\end{definition}

\begin{theorem}\label{secpropas}
The Assouad spectrum $\das^\vartheta$ has the sequence property for each $\vartheta\in(0,1)$.
\end{theorem}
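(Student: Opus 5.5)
The argument follows the proof of Theorem \ref{assseq}, with the free scale $r$ replaced by $R^{1/\vartheta}$ and centres restricted to points of the set. We use that $\das^\vartheta$ is monotone and finitely stable, as is standard (see \cite{Fra}). Fix $\vartheta\in(0,1)$ and a compact $E$ with $s=\das^\vartheta E>0$. Since $E$ is bounded, $s\le \dub E/(1-\vartheta)$, so $s$ is finite whenever $\dub E$ is. If $s=\infty$, the argument below works with $s-1/n$ replaced by $n$. Monotonicity gives $\das^\vartheta(a_k)\le s$ for every sequence in $E$, so only the lower bound needs proof.

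First, Lemma \ref{fullballpoint} gives a dimension point $x\in E$ with $\das^\vartheta E_n=s$ for every $n$, where $E_n=E\cap B(x,2^{-n})$. For each $n>1/s$ the defining inequality fails for $E_n$ with exponent $s-1/n$ and constant $C=n$. So there exist $x_n\in E_n$ and $R_n\in(0,1)$ with
\[
N_{R_n^{1/\vartheta}}\bigl(E_n\cap B(x_n,R_n)\bigr)\ \ge\ n\,R_n^{(1-1/\vartheta)(s-1/n)}.
\]
We then switch to the $r$-separated counting version of $N_r$. This changes counts only by constant factors, via the comparisons $N_{2r}^{\rm cov}(F)\le N^{\rm sep}_{2r}(F)$ and $N^{\rm sep}_{2r}(F)\le N^{\rm cov}_{r}(F)$. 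The resulting factors are absorbed either by the slack $1/n$ in the exponent or by using $C=n$ and letting $n\to\infty$.

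Next, choose a maximal $R_n^{1/\vartheta}$-separated set $P_n\subset E_n\cap B(x_n,R_n)$. Include $x_n$ in $P_n$, which costs at most one point. Let $(a_k)$ enumerate $\bigcup_n (P_n\setminus\{x\})$; since $P_n\subset B(x,2^{-n})$, we have $a_k\to x$. Put $A=(a_k)\cup\{x\}$. By finite stability together with the finite sets property, $\das^\vartheta(a_k)=\das^\vartheta A$.

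The key point, and the main difference from Theorem \ref{assseq}, is that the Assouad spectrum only allows centres in the set itself. This is why $x_n$ is put into $P_n$: the ball $B(x_n,R_n)$ is then centred at a point of $A$. Since $P_n\subset A\cap B(x_n,R_n)$, we get
\[
N_{R_n^{1/\vartheta}}\bigl(A\cap B(x_n,R_n)\bigr)\ \gtrsim\ \#P_n\ \gtrsim\ n\Bigl(\frac{R_n}{R_n^{1/\vartheta}}\Bigr)^{s-1/n}.
\]
Now take $\alpha<s$ and $C>0$. For large $n$ we have $\alpha<s-1/n$ and $n$ exceeds $C$ times the comparison constants. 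Since $R_n/R_n^{1/\vartheta}\ge 1$, the right-hand side then exceeds $C(R_n/R_n^{1/\vartheta})^\alpha$. So no constant works for exponent $\alpha$, giving $\das^\vartheta A\ge\alpha$ for every $\alpha<s$, and hence $\das^\vartheta(a_k)=s$.

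The step I expect to need the most care is the bookkeeping between covering numbers and separated-set counts at the fixed scale $R^{1/\vartheta}$. Here one cannot freely rescale $r$ independently of $R$. If a radius $2R^{1/\vartheta}$ arises, I would note that it equals $(2^{\vartheta}R)^{1/\vartheta}$ and relate it to the ball $B(x_n,2^{\vartheta}R_n)$. This shifts $R$ by a bounded factor, which alters $(R/R^{1/\vartheta})^{s}$ only by a constant, again absorbed as $n\to\infty$.
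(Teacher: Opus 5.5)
Your proposal is correct and is essentially the paper's proof, which simply reruns the argument of Theorem~\ref{assseq} with $r_n=R_n^{1/\vartheta}$. Putting $x_n$ into $P_n$, so that the test ball is centred at a point of the set as Definition~\ref{assspec} requires, handles a detail the paper passes over; one caution on your last paragraph is that relabelling $R$ as $2^{\vartheta}R$ does not by itself absorb the loss in passing from separated counts back to covering counts (the finite set would then sit in a ball of radius $2^{\vartheta}R$ rather than $R$), and what actually makes the constants harmless is bounded geometry, e.g.\ in $\R^n$ a closed ball of radius $\rho$ contains at most $3^n$ points of a $\rho$-separated set, which gives your displayed $\gtrsim$ at the original scale and centre directly and is the same fact the paper's one-line proof tacitly relies on.
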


\begin{proof}
The proof proceeds in a very similar way to that of Theorem \ref{assseq}  by setting $r_n = R_n ^{1/\vartheta}$ throughout, noting that the Assouad spectrum is finitely stable and monotone for each $\vartheta\in(0,1)$.
\end{proof}

\section{Intermediate dimensions}
\setcounter{equation}{0}
\setcounter{theorem}{0}

Intermediate dimensions were introduced  \cite{FFK} for bounded subsets of $\R^n$ to interpolate between Hausdorff dimensions and box dimensions. For $0\leq \theta\leq 1$  the {\it upper $\theta$-intermediate dimension} of a non-empty bounded $E\subset \mathbb{R}^n$ is given by
\begin{align}\label{upint}
\dui E  =  \inf &\big\{ s\geq 0  :  \mbox{\rm for all $\epsilon >0$ there exists $r_0>0$ s.t. for all  $0<r<r_0$,}\nonumber\\
 & \mbox{ \rm there is a cover $ \{U_i\} $ of $E$ s.t. $r^{1/\theta} \leq  |U_i| \leq r$ and $\sum |U_i|^s \leq \epsilon$}  \big\}
\end{align}
(by convention $r^{1/0}=0$). The {\it lower  $\theta$-intermediate dimension }$\underline{\dim}_{\theta}E$ is defined in the same way except the conditions are only required to hold for arbitrarily small $r>0$, that is for a sequence of $r$ approaching $0$. However, we do not consider 
$\underline{\dim}_{\theta}E$ further; in a similar manner to lower box dimension it is not finitely stable nor does it satisfy the sequence condition \cite[Proposition 5.5]{Ban}.

The intermediate dimensions $\overline{\dim}_{\theta}E$  provide a spectrum of dimensions that range from  Hausdorff to box dimensions, that is  $\theta\mapsto \overline{\dim}_{\theta}E$ is non-decreasing for $\theta \in [0,1]$ and
$$\dh E = \overline{\dim}_0 E \leq \overline{\dim}_{\theta}E
\leq \overline{\dim}_{1}E  = \dub E.$$ 
The function $\theta\mapsto \overline{\dim}_{\theta}E$ is continuous on $(0,1]$ and may or may not be continuous at 0.
The basic properties of intermediate dimensions on $\R^n$ are described in \cite{Fal2,FFK} and for $\theta \in (0,1]$ behave more like box dimensions than Hausdorff dimension. 

The definition of intermediate dimensions can be extended to a wider class of metric spaces $(X,d)$, see \cite{Ban}. However, care is needed since, unlike box and Assouad dimensions, the definition of $\overline{\dim}_{\theta}E$ is not intrinsic to $E$ but also depends on the ambient space $(X,d)$: some spaces $(X,d)$ may not allow a sufficient range of sets of suitable diameters  to provide the required coverings. For example, the set $E= \{0, 1, \frac12,\frac13,\ldots\}$ as a subset of  $\R $ with the usual metric has $\overline{\dim}_{\theta}E= 1/(1+\theta)$. However, taking  $E$ itself as the ambient space with the usual distance, $\overline{\dim}_{\theta}E$ is not defined since, for example, there is no set $U \subset E$ with $0<|U|<\frac12$ that contains the point $\{1\}$.
This question was discussed by Banaji \cite{Ban} who noted that if  $(X,d)$ is a uniformly perfect metric space then $\overline{\dim}_{\theta}E$ is defined for $E\subset X$ and the intermediate dimensions have the properties mentioned above for $\R^n$ including continuity in $\theta \in (0,1]$. (A space is {\it uniformly perfect } if there is $c\in (0,1)$ such that $B(x,r)\setminus B(x,cr) \neq \varnothing$ for all $x\in X$ and $0<r\leq |X|$; intuitively the space does not have islands that are too separated from the rest of the space.)

\subsection{The sequence property for upper intermediate dimensions}

Showing  that the upper intermediate dimension $\overline{\dim}_{\theta}$ satisfies the sequence condition for a $\theta \in (0,1]$, is  more involved than for upper box dimension since brackets of scales need to be considered. Here we give a proof in the case where the ambient space is a locally compact metric space, this includes the Euclidean spaces $\R^n$.  

It is convenient to introduce a definition of upper intermediate dimensions equivalent to \eqref{upint} using notation reminiscent of the pre-measures in the definition of Hausdorff measures.

For a non-empty $E\subset X$ and $0<r\leq r'$ we call a finite or countable collection $\{U_i\}_i$ of subsets of $ X$ an $(r,r')$-{\it cover} of  $E$ if $r\leq |U_i|\leq r'$ for all $i$ and $E\subset \bigcup_i U_i$. We emphasise that we allow covering by subsets of the ambient space $X$ and not just by subsets of $E$.

For $\alpha>0, \ 0<r\leq r'<1$ and $E \subset X$ we define the \emph{intermediate pre-measures} by 
\be\label{intpre}
\mathcal{H}_{r, r'}^\alpha(E) = \inf \Big\{ \sum_i |U_i|^\alpha : \{U_i\} \text{ is an } (r, r')\text{-cover of } E\Big\}.
\ee
It follows from \eqref{upint} that
\be\label{equivint}
\overline{\dim}_{\theta}E  =  \inf \big\{ s\geq 0  :   \limsup_{r \to 0} \mathcal{H}_{r^{1/\theta},r}^{s}(E) =0\big\},
\ee
which is equivalent to
\be\label{equivint2}
\overline{\dim}_{\theta}E  =  \sup \big\{ s\geq 0  :   \limsup_{r \to 0} \mathcal{H}_{r^{1/\theta},r}^{s}(E) =\infty\big\}.
\ee
We write $\mathcal{C}(X)$ for the family of all non-empty compact subsets of the metric space $(X,d)$. Recall that the Hausdorff metric $d_H$ is defined on $\mathcal{C}(X)$ by 
$$d_H(A,B) = \inf\{\delta: A \subset B_\delta \text{ and }  B \subset A_\delta\},\quad A,B \in \mathcal{C}(X),$$
where $A_\delta = \big\{x: \inf\{d(x,a): a\in A\} \leq \delta\big\}$ is the $\delta$-neighbourhood of $A$ in $(X,d)$. 

We first show that in a locally compact space intermediate pre-measures are continuous from below in the Hausdorff metric, and then use this to derive the sequence property.

\begin{lemma}\label{imdimcont}
Let $E$ be a non-empty compact subset of a compact metric space $(X,d)$ and let $0<r< r'$ and $\alpha>0$ be such that $\mathcal{H}_{r, r'}^\alpha(E)$ exists with  $\mathcal{H}_{r, r'}^\alpha(E)<\infty$. Let $\{E_k\}_{k=1}^\infty $ be a sequence of non-empty compact sets such that $E_k \subset E$ for all $k$ and $E_k \to E$ in the Hausdorff metric. Then 
$\mathcal{H}_{r, r'}^\alpha(E_k )\to \mathcal{H}_{r, r'}^\alpha(E)$.
\end{lemma}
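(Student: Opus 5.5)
Since $E_k\subset E$, every $(r,r')$-cover of $E$ is also an $(r,r')$-cover of $E_k$. Hence monotonicity of the pre-measure gives $\mathcal{H}_{r,r'}^\alpha(E_k)\le \mathcal{H}_{r,r'}^\alpha(E)$ for all $k$, and so $\limsup_k \mathcal{H}_{r,r'}^\alpha(E_k)\le \mathcal{H}_{r,r'}^\alpha(E)$. The whole content of the lemma is therefore the reverse inequality $\liminf_k \mathcal{H}_{r,r'}^\alpha(E_k)\ge \mathcal{H}_{r,r'}^\alpha(E)$. I would prove it by enlarging near-optimal covers of $E_k$ slightly so that they cover $E$, at small cost. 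The enlargement must keep diameters in $[r,r']$.

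Fix $\epsilon>0$ and suppose, passing to a subsequence, that $\mathcal{H}_{r,r'}^\alpha(E_k)\le L$ for all $k$, where $L<\mathcal{H}_{r,r'}^\alpha(E)$. Every set in an $(r,r')$-cover has diameter at least $r$. So a cover $\{U_i\}$ of $E_k$ with $\sum|U_i|^\alpha\le L+1$ has at most $M=(L+1)r^{-\alpha}$ members, a bound independent of $k$.

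Let $\delta_k=d_H(E_k,E)\to0$. Then $E\subset (E_k)_{\delta_k}\subset\bigcup_i (U_i)_{\delta_k}$, and $|(U_i)_{\delta_k}|\le |U_i|+2\delta_k$. The lower constraint $|(U_i)_{\delta_k}|\ge r$ is automatic. The upper constraint can fail when $|U_i|$ is close to $r'$. To handle this I would first shrink: pick $\lambda<1$ close to $1$ and show that covers with diameters in $[r,\lambda r']$ nearly achieve the infimum for $E_k$. I would try to do this by replacing each $U_i$ with $|U_i|>\lambda r'$ by a bounded number of pieces. This is where $r<r'$ strictly is used, and where compactness of $X$ (or a uniformly-perfect-type property) is needed to find pieces whose diameters are still at least $r$.

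Cleaner alternative: use compactness of $X$ and the bounded cardinality directly. Replace each $U_i$ by its closure; diameters are unchanged and the sets lie in $\mathcal{C}(X)$. The near-optimal covers of $E_k$ are then tuples of at most $M$ compact sets. Since $(\mathcal{C}(X),d_H)$ is compact, a subsequence of these tuples converges to a tuple $(V_1,\dots,V_m)$. Diameter is continuous in $d_H$, so $r\le|V_j|\le r'$ and $\sum|V_j|^\alpha\le \liminf_k\mathcal{H}_{r,r'}^\alpha(E_k)+\epsilon$.

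It remains to check that $\{V_j\}$ covers $E$. Take $y\in E$. Choose $y_k\in E_k$ with $d(y_k,y)\le\delta_k$, and let $y_k\in \overline{U^{(k)}_{j_k}}$. Pass to a further subsequence along which $j_k=j$ is constant. Then $y_k\to y$ and $\overline{U^{(k)}_j}\to V_j$ in $d_H$, so $y\in V_j$. This gives $\mathcal{H}_{r,r'}^\alpha(E)\le \liminf_k\mathcal{H}_{r,r'}^\alpha(E_k)+\epsilon$, and letting $\epsilon\to0$ finishes the proof. In this version the strictness $r<r'$ is not even needed.

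The main obstacle is this compactness step. One must justify the uniform bound on the number of sets (done above), handle tuples of different lengths (pass to a subsequence with constant $m$), and note that the closure of each set is compact in the compact space $X$. I would follow this second route and keep the enlargement argument only as a remark.
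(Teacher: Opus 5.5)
Your second route is essentially the paper's proof: bound the number of sets in near-optimal $(r,r')$-covers by a constant times $r^{-\alpha}$, pass to closures and to a subsequence with a fixed number of sets, and use sequential compactness of $(\mathcal{C}(X),d_H)$ to make each set converge. You then verify by pigeonholing and closedness that the limit tuple is an $(r,r')$-cover of $E$, with continuity of diameter controlling both the constraints and the sum. The one point to make explicit is that the subsequence must first be chosen to realise $\liminf_k \mathcal{H}_{r,r'}^\alpha(E_k)$, as the paper does and as your bound $\mathcal{H}_{r,r'}^\alpha(E_k)\le L$ effectively arranges; along an arbitrary convergent subsequence you only get a bound by the $\limsup$.
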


\begin{proof}

In \eqref{intpre} it is enough to take the infimum over $(r,r')$-covers $\{U_i\}$ of $E$ with at most $m:=\big\lceil \mathcal{H}_{r, r'}^\alpha(E) r^{-\alpha} \big\rceil$ sets. Also, as $\mathcal{H}_{r, r'}^\alpha(E_k) \leq \mathcal{H}_{r, r'}^\alpha(E)$, we need only consider $(r,r')$-covers of $E_k\subset E$ by at most $m$ sets for each $k$. We will repeatedly take infinite subsequences of $\{E_k\}$ and corresponding $(r,r')$-covers.

Firstly, let $\{E_{k_1}\}$ be a subsequence of $\{E_k\}$ such that 
\be\label{sublim}
\lim_{k_1 \to \infty}\mathcal{H}_{r, r'}^\alpha(E_{k_1})= \liminf_{k\to \infty}\mathcal{H}_{r, r'}^\alpha(E_k).
\ee
Let $\mathcal{U}_{k_1}= \{U_{{k_1},i}\}_{i=1}^{m_{k_1}}$ be an $(r,r')$-cover of $E_{k_1}$ by closed sets for each $k_1$, where $m_{k_1} \leq m$ and 
\be\label{sums}
0< \mathcal{H}_{r, r'}^\alpha(E_{k_1})\leq \sum_{i=1}^{m_{k_1}} |U_{{k_1},i}|^\alpha < \mathcal{H}_{r, r'}^\alpha(E_{k_1})+1/{k_1}.
\ee
We may find a subsequence $\mathcal{U}_{k_2} = \{U_{k_2,1}, U_{k_2,2} \ldots, U_{k_2,m_0} \}$ of the $m_k$-tuples of sets $\mathcal{U}_{k_1}$ for which $m_k = m_0$ is constant for all $k$. Since $(X,d)$ is compact, the space $\mathcal{C}(X)$ of non-empty compact subsets of $X$ is sequentially compact in the Hausdorff metric, see, for example,  \cite[Section 2.4]{Edg}. By repeatedly taking convergent subsequences of  
$\mathcal{U}_{k_2} = \{U_{k_2,1}, U_{k_2,2} \ldots, U_{k_2,m_0} \}$ to obtain convergence in the Hausdorff metric of each term in $\mathcal{U}_{k_2}$ in turn we may obtain an $m_0$-tuple $\mathcal{U}_{k_3} = \{U_{k_3,1}, U_{k_3,2} \ldots, U_{k_3,m_0} \}$ such that $U_{k_3,i}\to U_i$ in the Hausdorff metric $d_H$ for some $U_i \in \mathcal{C}(X)$ for each $1 \leq i \leq m_0$. 
 
 Let $x\in E$. As $E_k \to E$ in $d_H$ there is a sequence $(x_k)$ with $x_k \in E_k$ such that $x_k \to x$, and so some $1\leq i\leq m_0$ with  $x_{k_3} \in U_{k_3,i}$ for infinitely many $k_3$; call this subsequence  $(x_{k_4})\in U_{k_4,i}$,  with $x_{k_4} \to x$. As $U_{k_4,i}\to U_i$ in the Hausdorff metric, there are points $y_{k_4} \in U_{i}$ such that $d(x_{k_4},y_{k_4}) \to 0$, so $y_{k_4} \to x$, giving that $x \in U_i$ since $U_i$ is closed. It follows that $E \subset \bigcup_{i=1}^{m_0}U_i$. 
Since diameter is continuous in the Hausdorff metric, $r\leq |U_i|\leq r'$ for each $i$, so $\{U_i\}_{i=1}^{m_0}$ is an $(r,r')$-cover of $E$. 

Using \eqref{sums}, \eqref{sublim} and that $E_k \subset E$,
\begin{align*}\label{sumineq}
\mathcal{H}_{r, r'}^\alpha(E) \leq \sum_{i=1}^{m_0} |U_{i} |^\alpha = &\lim_{k_3\to \infty}\sum_{i=1}^{m_0} |U_{k_3,i} |^\alpha  =\lim_{k_3\to \infty}\mathcal{H}_{r, r'}^\alpha(E_{k_3})\\
&= \liminf_{k\to \infty}\mathcal{H}_{r, r'}^\alpha(E_k)\leq \limsup_{k\to \infty}\mathcal{H}_{r, r'}^\alpha(E_k)\leq \mathcal{H}_{r, r'}^\alpha(E),
\end{align*}
giving the conclusion.
\end{proof}

We can now obtain a sequence result for intermediate dimensions in locally compact spaces.

\begin{theorem}\label{imdimpf}
In a locally compact metric space $(X,d)$, for each $\theta\in(0,1)$ the intermediate dimension $\dui$ has the sequence property  for compact sets $E\subset X$ for which $\dui E$ exists with $\dui E<\infty$.
\end{theorem}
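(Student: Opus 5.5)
Following the pattern of Theorems \ref{seqpropbd} and \ref{assseq}, let $s=\dui E\in(0,\infty)$ and take a dimension point $x\in E$ from Lemma~\ref{fullballpoint}; this needs $\dui$ to be finitely stable, which holds because pre-measures are subadditive over unions. By local compactness, fix $\rho_0>0$ such that $X_0:=B(x,\rho_0)$ is compact. For $n\ge n_0$ (with $2^{-n_0}<\rho_0/2$) set $E_n=E\cap B(x,2^{-n})$, so that $\dui E_n=s$. Coverings of subsets of $E_n$ by sets of diameter at most $r<\rho_0/2$ can be intersected with $X_0$ without increasing diameters. I would check carefully that lower bounds on diameters survive this intersection, or else note that a cover set meeting $E_n$ can be replaced inside $X_0$ by a closed set of the same diameter. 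This allows Lemma~\ref{imdimcont} to be applied in the compact space $X_0$. This reduction is routine but needs care.

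Next I would build the sequence inductively. By \eqref{equivint2}, for each $n$ we have $\limsup_{r\to0}\mathcal{H}^{s-1/n}_{r^{1/\theta},r}(E_n)=\infty$. Choose $r_n<\min\{2^{-n},r_{n-1}^{1/\theta}\}$ with $\mathcal{H}^{s-1/n}_{r_n^{1/\theta},r_n}(E_n)>n+1$. The finiteness hypothesis in Lemma~\ref{imdimcont} holds because $E_n$ is compact, so it has a finite cover by sets of diameter exactly $r_n$. Take finite sets $F_{n,j}\subset E_n$ increasing to a dense subset; then $F_{n,j}\to E_n$ in $d_H$. By Lemma~\ref{imdimcont} with $r=r_n^{1/\theta}$ and $r'=r_n$, there is a finite $P_n\subset E_n$ with $\mathcal{H}^{s-1/n}_{r_n^{1/\theta},r_n}(P_n)>n$. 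The strict inequality $r<r'$ holds since $\theta<1$, which is where $\theta\in(0,1)$ enters. Enumerate $\bigcup_n(P_n\setminus\{x\})$ as $(a_k)$. Since $P_n\subset B(x,2^{-n})$, we get $a_k\to x$.

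Finally, by monotonicity of pre-measures in the set and the finite sets property, $\mathcal{H}^{s-1/m}_{r_n^{1/\theta},r_n}((a_k)\cup\{x\})\ge\mathcal{H}^{s-1/m}_{r_n^{1/\theta},r_n}(P_n)\ge \mathcal{H}^{s-1/n}_{r_n^{1/\theta},r_n}(P_n)>n$ for $n\ge m$, using $r_n<1$. Here I would use $|U|^{s-1/m}\ge|U|^{s-1/n}$ for $|U|\le1$ and $m\le n$. Hence the $\limsup$ is infinite for each exponent $s-1/m$, and \eqref{equivint2} gives $\dui(a_k)\ge s$. The reverse inequality follows from monotonicity.

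I expect the main obstacle to be the first step, namely legitimately passing to a compact ambient space so that Lemma~\ref{imdimcont} applies. In particular I need to ensure that the intermediate pre-measures of subsets of $E_n$ computed in $X$ and in $X_0$ agree, or are at least comparable up to constants, which would suffice for dimension. The case $\theta=1$, where $r=r'$, is excluded from the statement but is in any case covered by Theorem~\ref{seqpropbd}.
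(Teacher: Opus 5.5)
Your proposal is essentially the paper's own proof: a dimension point $x$, reduction to a compact ambient space, Lemma~\ref{imdimcont} applied to finite sets increasing to a dense subset of $E_n$, and the union of the resulting finite sets $P_n$. For the reduction the paper uses the compact neighbourhood $E_\delta$ where you use $B(x,\rho_0)$, and your worry there dissolves, since any set of diameter at most $r<\rho_0/2$ that meets $E_n$ already lies inside $B(x,\rho_0)$, so the pre-measures of subsets of $E_n$ are literally unchanged.

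The one justification to replace is the finiteness of $\mathcal{H}^{s-1/n}_{r_n^{1/\theta},r_n}(E_n)$. In a general metric space there need not be any sets of diameter exactly $r_n$. Argue instead, as the paper does, that $\dui E<\infty$ gives $\mathcal{H}^{\alpha}_{r^{1/\theta},r}(E)\le 1$ for some $\alpha>s$ and all small $r$. This yields a finite $(r^{1/\theta},r)$-cover of $E\supset E_n$, and hence a finite pre-measure at every exponent, provided $r_n$ is taken small enough.
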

\begin{proof}
For a compact set $E$ in a locally compact metric space there exists $\delta>0$ such that the $\delta$-neighbourhood $E_\delta$ of $E$ is compact. For finding $\dui A$ for $A\subset E$ it is enough to consider $\mathcal{H}_{r^{1/\theta}, r}^\alpha (A)$  for $r\leq \delta$ so we may work entirely in the compact metric space $(X,d) = (E_\delta,d)$.

Let  $0<\dui E = s<\infty$. If $\alpha >s$ then  
$\limsup_{r \to 0} \mathcal{H}_{r^{1/\theta},r}^{\alpha}(E) =0$, so  $\mathcal{H}_{r^{1/\theta},r}^{\alpha}(E) \leq 1$ for all $0<r\leq r_0$ for some $0< r_0<\min\{1,\delta\}$.  From \eqref{intpre} $E$ has an $(r^{1/\theta},r)$-cover comprising at most $\big\lceil r^{-\alpha/\theta} \big\rceil$ sets. In particular,  all subsets of $E$ have a finite $(r^{1/\theta},r)$-cover for all $0<r\leq r_0$.

Let $x$  be a dimension point of $E$, as in Lemma~\ref{fullballpoint}, so $\dui(E\cap B(x,\rho))=s$ for all $\rho>0$. Let $E_n=E\cap B(x,2^{-n})$ so $\dui E_n=s$ for all $n\in \mathbb{N}$. 
For each $n> 1/s$ choose $r_n\in(0,2^{-n})$ such that $r_n < r_{n-1}$ and
\be\label{hsen}
\mathcal{H}_{r_n^{1/\theta}, r_n }^{s-1/n}(E_n) \geq 1
\ee
using \eqref{equivint2}. As $E_n$ is compact it is separable. Thus  we may find a sequence $(a_i^n)_{i=1}^\infty$ that is dense in $E_n$, so the sequence of compact sets $A_k^n = \bigcup_{i=1}^k \{a_i^n\}\subset E_n$ converges to $E_n$ in the Hausdorff metric. By Lemma \ref{imdimcont}  $\mathcal{H}_{r_n^{1/\theta},r_n}^{s-1/n}(A_k^n)\to \mathcal{H}_{r_n^{1/\theta},r_n}^{s-1/n}(E_n)\geq 1$ as $k\to\infty$, so we may choose some $k$ such that $\mathcal{H}_{r_n^{1/\theta}, r_n }^{s-1/n}(A_k^n) \geq \frac12$ to obtain a finite set $P_n := A_k^n\subset E_n$.

Let $n_0>1/s$ be such that $0<r_{n_0} \leq r_0$. Let $P = \bigcup_{n=n_0}^\infty P_n \subset E$. For $0<\alpha <s$, 
$$\mathcal{H}_{r_n^{1/\theta}, r_n }^\alpha(P)\geq\mathcal{H}_{r_n^{1/\theta}, r_n }^\alpha(P_n)\geq \mathcal{H}_{r_n^{1/\theta}, r_n }^{s-1/n}(P_n)\geq \textstyle{\frac12}$$
if $s-1/n > \alpha$.
This holds for a sequence $r_n\to 0$ so $\alpha \leq \dui P\leq \dui E =s$  using \eqref{equivint} and monotonicity. Taking $\alpha$ arbitrarily close to $s$ gives $\dui P=\dui E $.
Enumerating  $P\setminus \{x\} \subset E$ gives a sequence $(a_k)$ converging to the only accumulation point $x$ of $P$ since $P_n\subset B(x,2^{-n})$ for each $n$, so   $\overline{\dim}_{\theta}(a_k)=\dui P =\dui E$.
\end{proof}

Again we remark that in Theorem \ref{imdimpf} we may find a suitable sequence converging to any dimension point of $E$.

\section{Box dimension profiles -- Projection theorems}\label{dimprof}
\setcounter{equation}{0}
\setcounter{theorem}{0}
For $1\leq m< n$ let $G(n,m)$ be the Grassmanian of $m$-dimensional subspaces of $\mathbb{R}^n$ endowed with the natural invariant measure. Let $\pi_V:\mathbb{R}^n\to V$ denote orthogonal projection onto $V\in G(n,m)$. The well-known projection theorem of Marstrand for sets in $\R^2$  \cite{Mar}, extended to higher dimensions by Mattila \cite{Mat,Mat2}, states that for $E$ a Borel set in $\R^n$, $\dh \pi_V E= \min\{m, \dh E\}$ for almost all $V\in G(n,m)$. The analogous projection results for box and packing dimensions are more subtle. For a given bounded $E\subset \R^n$, 
$\dub \pi_V E$ is constant for almost all $V\in G(n,m)$; however this constant may take any value in the range  
\begin{equation}\label{boxbounds} 
\frac{\dub E} {1+ (1/m - 1/n)  \dub E} \leq  \dub \pi_V E\leq \min\{\dub E,m\},
\end{equation}
and examples show that these bounds are sharp \cite{FH,Jar}. The almost sure values of $\dub \pi_V E$ form the  upper box {\it dimension profile} $\dub^{\,m}E$ of $E$, thought of as the upper box dimension of $E$ when viewed from an $m$-dimensional subspace. These profiles extend to when $m$ is not an integer in a natural way.
The original definition of dimension profiles \cite{FH} was cumbersome and  the definition and properties were reworked in \cite{FalCap,Fal4} in terms of capacities with respect to certain kernels, the approach we consider here.

\subsection{Capacities}
We summarise the properties of capacities that we need.

\begin{definition}\label{defintdim}
For $s>0$ and $r>0$   let $\phirs{r}{s}$ be the continuous and bounded kernel given by
\be\label{ker}
\phirs{r}{s}(x)=\min\!\Big\{1,\Big(\frac{r}{|x|}\Big)^s\Big\}
\qquad (x\in\R^n).
\ee
The \emph{energy} of a Borel probability measure $\mu$ with respect to $\phirs{r}{s}$ is 
\[
I_r^s(\mu)=\iint \phirs{r}{s}(x-y)\,d\mu(x)\,d\mu(y).
\]
Let $E\subset \R^n$ be non-empty and compact. Writing $\M(E)$ for the Borel probability measures supported by $E$, the \emph{capacity of} $E$ with respect to $\phirs{r}{s}$ is given by
\[
\frac{1}{\Cs{r}{s}(E)}=\inf_{\mu\in \M(E)} I_r^s(\mu).
\]
Note that the infimum energy will be finite for a bounded kernel on $\R^n$ and is attained by an equilibrium measure. The \emph{$s$-dimensional upper box dimension profile} of $E\subset \R^n$ is then defined by 
\be\label{udpdef}
\dub^{\,s}E =\limsup_{r\to 0}\frac{\log \Cs{r}{s}(E)}{-\log r} \qquad (s>0).
\ee
\end{definition}

The following proposition, see \cite{FalCap, Fal4}, motivates the introduction of box dimension profiles.

\begin{proposition}
Let $E\subset \R^n$ be compact. For integers $1\leq m< n$, $\dub \pi_V E= \dub^m E$ for almost all $V\in G(n,m)$. If $s\geq n$ then  $\dub^s E = \dub E$. 
\end{proposition}

By considering energies $I_r^s(E)$ it is easily seen that $\dub^s E$ is monotonic for each $S$. Moreover, if $E$ is finite consisting of $k$ distinct points $\{x_i\}_{i=1}^k$ and $\mu\in\M(E)$, then by Cauchy's inequality $I_r^s(\mu)\geq \sum_{i=1}^k \mu(x_i)^2 \geq 1/k$ for all $r>0$, so 
$\Cs{r}{s}(E)$ is bounded and $\dub^{\,s}E =0$.

For finite stability, let $A,B\subset \R^n$ be non-empty compact sets. Capacities with respect to many kernels, including \eqref{ker}, are subadditive, 
i.e. 
$$\Cs{r}{s}(A\cup B)\leq \Cs{r}{s}(A)+\Cs{r}{s}(B),$$
see \cite[$\S$3.6]{DP}, so
$$\log \Cs{r}{s}(A\cup B)\leq \max\{\log \Cs{r}{s}(A),\log \Cs{r}{s}(B)\}+\log 2 $$
and dividing by $-\log r$ and taking the upper limits establishes finite stability of  $\dub^{\,s}$.

\subsection{The sequence property for dimension profiles}

We use capacities to obtain the sequence property for dimension profiles $\dub^s$ for $s>0$.

\begin{lemma}\label{finapprox}
Let $s>0$ and $r>0$.   Let $F\subset \R^n$ be non-empty and compact and $0<\lambda<1$. Then there exists a finite set $P\subset F$ such that
\[
\Cs{r}{s}(P)\geq \lambda \,\Cs{r}{s}(F).
\]
\end{lemma}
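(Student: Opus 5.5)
We need $\inf_{\nu\in\M(P)} I_r^s(\nu) \le \lambda^{-1}\inf_{\mu\in\M(F)} I_r^s(\mu)$. The plan is to discretise an equilibrium measure of $F$ onto finitely many points, using the uniform continuity of the kernel $\phirs{r}{s}$.

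Let $\mu\in\M(F)$ be an equilibrium measure, so $I_r^s(\mu)=1/\Cs{r}{s}(F)=:I>0$; positivity holds since $\phirs{r}{s}>0$ on the bounded set $F-F$. The kernel $\phirs{r}{s}$ is continuous on $\R^n$ and hence uniformly continuous on the compact set $F-F$. Given $\eta>0$, choose $\delta>0$ such that $|\phirs{r}{s}(u)-\phirs{r}{s}(v)|<\eta$ whenever $u,v\in F-F$ with $|u-v|<2\delta$.

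Cover $F$ by finitely many Borel sets $Q_1,\dots,Q_N$ that are disjoint, have diameter less than $\delta$, and meet $F$; for instance, take the intersections of $F$ with half-open grid cubes. In each $Q_j$ pick a point $p_j\in F\cap Q_j$, and set $P=\{p_1,\dots,p_N\}\subset F$. Define $\nu=\sum_j \mu(Q_j)\,\delta_{p_j}\in\M(P)$. This is the push-forward of $\mu$ under the map $T$ sending $Q_j$ to $p_j$, and $|T(x)-x|<\delta$ for all $x\in F$.

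The energies are then compared:
\[
I_r^s(\nu)=\iint \phirs{r}{s}(T x-T y)\,d\mu(x)\,d\mu(y)\le \iint \bigl(\phirs{r}{s}(x-y)+\eta\bigr)\,d\mu\,d\mu = I+\eta,
\]
because $|(Tx-Ty)-(x-y)|<2\delta$ and both differences lie in $F-F$. Hence
\[
\frac{1}{\Cs{r}{s}(P)}\le I_r^s(\nu)\le I+\eta .
\]
Choose $\eta=(\lambda^{-1}-1)I>0$. Then $1/\Cs{r}{s}(P)\le I/\lambda$, which is exactly $\Cs{r}{s}(P)\ge\lambda\,\Cs{r}{s}(F)$.

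The main step is the uniform-continuity comparison of energies under the discretisation map $T$. It works because the kernel is bounded and continuous for fixed $r>0$ and $\mu$ is a probability measure. Only existence of the infimum is really needed, so if one prefers not to rely on attainment, take $\mu$ with energy below $I(1+\epsilon)$ instead of an equilibrium measure.
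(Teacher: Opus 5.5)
Your proof is correct and follows the same strategy as the paper: approximate an equilibrium measure of $F$ by a finitely supported probability measure on $F$ whose energy exceeds the minimum by at most a factor $\lambda^{-1}$. The difference is in how that step is justified. The paper quotes weak density of finitely supported measures in $\M(F)$ and weak continuity of $\mu\mapsto I_r^s(\mu)$. You instead build the approximant explicitly as the push-forward of $\mu$ under a $\delta$-small discretisation map $T$, and bound its energy directly using uniform continuity of $\phirs{r}{s}$. This makes the argument self-contained and quantitative, and, as you note, it does not need the infimum to be attained.
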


\begin{proof}
Let $\mu_0\in \M(F)$ be an equilibrium measure on $F$, so that
\[
I_r^s(\mu_0)=\frac{1}{\Cs{r}{s}(F)}.
\]
Because $F$ is compact and the kernel $\phirs{r}{s}(x-y)$ is continuous and bounded on $F\times F$, the map $\M(F)\to \mathbb{R}^+$ given by 
\[
\mu\mapsto I_r^s(\mu)
\]
is continuous with respect to weak convergence of probability measures. Finitely supported probability measures are weakly dense in $\M(F)$, see for example \cite[Theorem 6.3]{Pat}, so there exists a finitely supported probability measure $\nu\in\M(F)$ such that
\[
I_r^s(\nu)\leq \frac{1}{\lambda}I_r^s(\mu_0)
\]
giving  the conclusion with $P=\supp \nu$.
\end{proof}

\begin{theorem}
For each $s>0$, the dimension profile $\dimup^{\,s}$ satisfies the sequence property on subsets of $\R^n$.
\end{theorem}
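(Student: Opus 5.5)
We follow the same scheme as for Theorems \ref{seqpropbd} and \ref{imdimpf}, with Lemma \ref{finapprox} supplying the finite sets at each scale. Let $E\subset\R^n$ be compact with $t:=\dimup^{\,s}E>0$. We have already noted that $\dimup^{\,s}$ is monotone, finitely stable and has the finite sets property. Hence Lemma \ref{fullballpoint} gives a dimension point $x\in E$. Setting $E_n=E\cap B(x,2^{-n})$, each $E_n$ is compact and satisfies $\dimup^{\,s}E_n=t$.

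Next we choose the scales. Assume first that $t<\infty$; in fact $\Cs{r}{s}(E)$ is at most of order $r^{-n}$, so $t\le n$ and this always holds. By \eqref{udpdef}, for each $n$ we can choose $r_n\in(0,2^{-n})$ with $r_n<r_{n-1}$ and
\[
\frac{\log \Cs{r_n}{s}(E_n)}{-\log r_n}\ge t-\frac1n .
\]
We then apply Lemma \ref{finapprox} with $F=E_n$, $r=r_n$ and $\lambda=\frac12$. This gives a finite set $P_n\subset E_n$ with $\Cs{r_n}{s}(P_n)\ge \frac12\Cs{r_n}{s}(E_n)$.

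Let $P=\bigcup_n P_n\cup\{x\}$. This set is compact, since its only accumulation point is $x$, because $P_n\subset B(x,2^{-n})$. Enumerate $P\setminus\{x\}$ as a sequence $(a_k)$, so $a_k\to x$. Capacity is monotone in the set: if $A\subset B$ then $\M(A)\subset\M(B)$, so the infimum energy over $B$ is no larger and $\Cs{r}{s}(A)\le\Cs{r}{s}(B)$. Therefore
\[
\frac{\log \Cs{r_n}{s}(P)}{-\log r_n}\ \ge\ \frac{\log \Cs{r_n}{s}(P_n)}{-\log r_n}\ \ge\ t-\frac1n-\frac{\log 2}{-\log r_n}.
\]
The right-hand side tends to $t$ as $r_n\to0$, so $\dimup^{\,s}P\ge t$. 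Monotonicity gives the reverse inequality, since $P\subset E$. Finally, the finite sets property together with finite stability gives $\dimup^{\,s}(a_k)=\dimup^{\,s}P=t$.

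There is one technical point. Definition \ref{defintdim} is phrased for compact sets, which is why we add the limit point $x$ before evaluating the capacity. If $\dimup^{\,s}$ of the non-compact set $(a_k)$ is to be interpreted through its closure, the same identity holds, because that closure is exactly $P$. The main step needing care is Lemma \ref{finapprox} itself: it gives a finite subset that captures the capacity at a single fixed scale, and this is what replaces the $r$-separated sets used for box dimension. Given that lemma, the argument above is routine.
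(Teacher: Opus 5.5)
Your proposal is correct and follows essentially the same route as the paper's proof. Both take a dimension point $x$, choose scales $r_n<2^{-n}$ at which $E_n=E\cap B(x,2^{-n})$ nearly attains the profile value, and extract finite sets $P_n$ via Lemma \ref{finapprox} with $\lambda=\frac12$. Both then use monotonicity of capacity to pass to $(a_k)\cup\{x\}$, with the finite sets property and finite stability handling the added limit point.
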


\begin{proof}
Let $E\subset \R^n$ be compact with $\dimup^{\,s}=t>0$.
As  $\dimup^{\,s}$ is finitely stable, by Lemma \ref{fullballpoint} there is a dimension point $x\in E$ such that for all $\rho>0$,
\[
\dimup^{\,s}E= \dimup^{\,s}(E\cap B(x,\rho)).
\]
Let $E_n=E\cap B(x,2^{-n})$, then for each $n>1/t$  we may choose $r_n\in(0,2^{-n})$ such that
\be\label{tminus}
\frac{\log \Cs{r_n}{s}(E_n)}{-\log r_n}> t-\frac1n.
\ee
By  Lemma \ref{finapprox} there is a finite set $P_n\subset E_n$ such that
\[
\Cs{r_n}{s}(P_n)\geq \textstyle{\frac12}\,\Cs{r_n}{s}(E_n).
\]
Since each $P_n\subset B(x,2^{-n})$ we may enumerate $\big(\bigcup_{n=\lceil 1/t\rceil}^\infty P_n)\setminus \{x\} \subset E$ as a sequence $(a_k)$ with $a_k\to x$. 
Moreover, for each $n\in\mathbb N$,
\[
\Cs{r_n}{s}\big((a_k)\cup\{x\}\big)\geq \Cs{r_n}{s}(P_n)\geq \textstyle{\frac12}\,\Cs{r_n}{s}(E_n),
\]
so taking logs, dividing by $-\log r_n$ and using \eqref{tminus}, letting $n\to\infty$ gives that 
$\dimup^{\,s}(a_k)=\dimup^{\,s}((a_k)\cup\{x\})\geq \dimup^{\,s}E$, with the opposite inequality following from monotonicity of $\dimup^{\,s}$.
\end{proof}

\section{Simultaneous representation by sequences}
\setcounter{equation}{0}
\setcounter{theorem}{0}

It is natural to ask is whether, given a compact set $E$, one can find a single convergent sequence $(a_k) \subset E$ with $\dim (a_k) =\dim E$ simultaneously for two or more definitions of dimension. In general this is not possible unless the definitions  have a common dimension point. We first give some examples that illustrate this and then we exhibit some settings where simultaneous representation is possible in a strong sense.

First, an example of a set $E$ for which, in particular, no sequence can witness Assouad spectrum values $\dim_A^{1/4}E$  and $\dim_A^{3/4}E$ simultaneously.

\begin{example}{\rm (Assouad spectrum)}\label{eg3}
There exists a compact set $E \subset [0,3]$ such that for every convergent sequence $(a_k)_k \subset E$,
\be\label{ineqass}
\dim_A^{\vartheta}(a_k)_k < \dim_A^{\vartheta}E
\ee
either for all $0<\vartheta<\frac12$ or for all $\frac12<\vartheta<1$.
\end{example}

\begin{proof}
Let $0<s<1$ and let $E_1\subset [0,1]$ be an $s$-Ahlfors regular compact set, for example a self-similar set satisfying the strong separation condition. By \cite[Section 6.4]{Fra}
$$  \dim_A^\vartheta E_1=s
    \qquad\text{for all } \vartheta\in(0,1).$$
By Rutar's characterisation of admissible Assouad spectra \cite{Rut}, there exists a compact set $E_2\subset [2,3]$ such that
$$ \dim_A^\vartheta E_2= \varphi(\vartheta) 
     \qquad\text{for all } \vartheta\in(0,1),
$$
where $ \varphi(\vartheta)=\min\{1,s/2(1-\vartheta)\}$. Setting   $E=E_1\cup E_2$, $$
    \dim_A^\vartheta E  = \max\{\dim_A^\vartheta E_1,\dim_A^\vartheta E_2\}
    =
    \max\{s,\varphi(\vartheta)\}
$$
by finite stability of the Assouad spectrum. In particular,
$$
    \dim_A^\vartheta E=s
    \ \text{ for } 0<\vartheta<\textstyle{\frac12} \quad \text{ and }\quad 
 \dim_A^\vartheta E=\varphi(\vartheta)>s
    \  \text{ for }\textstyle{\frac12}<\vartheta<1.
$$
Let $(a_k)_{k}\subset E$ with $a_k \to x \in E$. If $x \in E_1$ then $a_k \in E_1$ for $k\geq k_0$ for some $k_0$, so 
$$\dim_A^{\vartheta}(a_k)_{k\geq 1}=\dim_A^{\vartheta}(a_k)_{k\geq k_0}\leq\dim_A^{\vartheta}E_1
    =s<\varphi(\vartheta)=\dim_A^{\vartheta}E\quad \text{ if } \textstyle{\frac12}<\vartheta<1.$$
Similarly if $x\in E_2$ then
    $$\dim_A^{\vartheta}(a_k)_{k\geq 1}=\dim_A^{\vartheta}(a_k)_{k\geq k_0}\leq\dim_A^{\vartheta}E_2 =\varphi(\vartheta) <s =\dim_A^{\vartheta}E \quad \text{ if } 0<\vartheta <\textstyle{\frac12}.
$$
Thus in both cases there is a range of $\theta$ with strict inequality in \eqref{ineqass}.
\end{proof}

A similar approach provides an example for intermediate dimensions $ \overline{\dim}_\theta$.

\begin{example}{\rm (Intermediate dimensions)}\label{eg1}
There exists a compact set  $E \subset [0,3]$ such that for every convergent sequence
$(a_k)_{k}\subset E$, 
\be
\label{ineqtheta}
\overline{\dim}_\theta (a_k)_{k} < \overline{\dim}_\theta E
\ee
either for all $0<\theta<\frac13$ or for all $\frac13<\theta\leq 1$. 
\end{example}

\begin{proof}
Let $E_1\subset [0,1]$ be a compact set such that
$$ \dh E_1=\dub E_1 =\textstyle{\frac12}, $$
so $\overline{\dim}_\theta E_1 =\textstyle{\frac12}$ for all $\theta\in [0,1]$;  the middle-half Cantor set is such a set.
Let
$$ E_2:=\{2+n^{-1/3}:n\in\mathbb N\}\cup\{2\}\subset [2,3].$$
Then
$$
\overline{\dim}_\theta E_2 =\frac{3\theta}{3\theta+1}
\qquad\text{for all }\theta\in(0,1],
$$
see \cite{Fal2,FFK}.
Let
$$
E=E_1\cup E_2.
$$
By finite stability,
$$
\overline{\dim}_\theta E
=
\max\Big\{\frac12,\frac{3\theta}{3\theta+1}\Big\}.
$$
As in Example \ref{eg3}, if $(a_k)_{k}\subset E$ is a convergent sequence with $a_k \to x \in E$, then either $x \in E_1$ for all sufficiently large $k$ so
$$\textstyle{\overline{\dim}_\theta (a_k)\leq \overline{\dim}_\theta E_1 = \frac12<   \frac{3\theta}{3\theta+1} =\overline{\dim}_\theta E}\quad \text{ if } \frac13<\theta\leq 1,$$
or $x \in E_2$ so
$$\textstyle{\overline{\dim}_\theta (a_k)\leq \overline{\dim}_\theta E_2 \le \frac{3\theta}{3\theta+1}< {\textstyle \frac12}=\overline{\dim}_\theta E}\quad \text{ if } 0<\theta <\frac13.$$
\end{proof}
The following example exhibits a subset of $\R^2$ for which no convergent sequence can witness both the box dimension of the set and that of its projections onto lines. The example could equally well be expressed in terms of the dimension profiles discussed in Section \ref{dimprof}, recalling that,  for $F \subset \R^2$,  $\dub^2 F = \dub F$ and $ \dub^1 F = \dub\,{\pi_\theta}F$ for almost all $\theta \in (-\pi, \pi],$ where $\pi_\theta$ denotes orthogonal projection from $\R^2$ onto the line in direction $\theta$.

\begin{example}{\rm (Projections/dimension profiles)}\label{eg2}
There exists a compact set  $E \subset \R^2$ such that for every convergent sequence
$(a_k)_{k}\subset E$ either 
$$\dub (a_k) < \dub E $$
or
$$ \dub\, {\pi_\theta}(a_k) <  \dub\,{\pi_\theta}E \quad \text{ for almost all }\theta \in (-\pi, \pi].$$
\end{example}

\begin{proof}
Let $E_1 \subset \R^2$ be a compact set such that 
$$\dub\, E_1 =  \dub\, {\pi_\theta}E_1 =\textstyle{\frac34}$$ 
for all $\theta$, for example $E$ could be a strongly separated self-similar set with dense rotations. Let $E_2 \subset \R^2$ be  compact and disjoint from $E_1$ such that 
$$\dub\, E_2 =  1 \text{ and } \dub\, {\pi_\theta}E_2 =\textstyle{\frac23}$$ 
for almost all $\theta$, such a set exists as the extreme case of \eqref{boxbounds} taking $n=2$ and $m=1$, see \cite{FalCap,FH,Jar}.
Then setting $E=E_1\cup E_2$, 
$$ \dub\, E = \max\{\textstyle{\frac34},1\} =1 \text{ and  }  \dub\, {\pi_\theta}E= \max\{\textstyle{\frac34},\textstyle{\frac23}\}= \textstyle{\frac34}.$$

As in Example, \ref{eg1}, if $(a_k)\subset E$ is a convergent sequence with $a_k \to x\in E$ then either $a_k\in E_1$ for all sufficiently large $k$ so
Let $(a_k)_{k}\subset E$ with $a_k \to x \in E$. If $x \in E_1$ then $a_k \in E_1$ for sufficiently large $k$, so $\dub (a_k) \leq \frac34 < 1= \dub E$. Similarly, if $x \in E_2$ then 
$\dub\, {\pi_\theta}(a_k) \leq \frac23 < \frac34=  \dub\, {\pi_\theta}E$ for almost all $\theta$.
\end{proof}

Examples \ref{eg3}-- \ref{eg2} showed that if a set has no common dimension point for two definitions of dimension, there need not be a sequence that witnesses both definitions simultaneously. 
However, it may be possible to find a sequence that witnesses several different definitions of dimension if a set has  common dimension points for several dimensions such as those considered in Sections 3-6. The following lemma notes that given countably many sequences with a common limit, there exists a sequence that contains the `tail' of each of these sequences.

\begin{lemma}\label{ctble}
Let  $(X,d)$ be a metric space and let $x\in X$. For each 
$j\in\mathbb{N}$ (or for $j \in \{1,2,\ldots,n\}$), let $(a^{(j)}_k)_{k}$ be a sequence with $a^{(j)}_k \to x$.
Then there exists a sequence $(a_i)_{i}$ with $a_i \to x$ such that 
\medskip

$(i)$ $(a_i)_{i} \subset \bigcup_{j\in \N}\bigcup_{k\in \N} \{a^{(j)}_k\}$
\medskip

\noindent and
\medskip

$(ii)$ for all $j\in \N$ there is a number $k(j) \in \N$ such that $(a^{(j)}_k)_{k\geq k(j)} \subset \{a_i\}_{i\geq 1}$.

\noindent Moreover, if $(a^{(j)}_k)\neq x$ for all $j\in \N$ and $k\in \N$, then we can take $(a_i)_{i}\neq x$ for all $i\in \N$.
 \end{lemma}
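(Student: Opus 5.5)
The idea is a diagonal construction: from the $j$-th sequence we take only those terms lying in a shrinking ball around $x$, and only for finitely many $j$ at each stage. For each $j\in\N$ and $n\in\N$ put
$$k(j,n)=\min\{k_0\in\N: d(a^{(j)}_k,x)<2^{-n}\ \text{for all}\ k\geq k_0\},$$
which is finite since $a^{(j)}_k\to x$. Clearly $k(j,n)$ is non-decreasing in $n$. Set $k(j):=k(j,j)$.

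Next we define the set of selected points
$$S=\bigcup_{j\in\N}\{a^{(j)}_k: k\geq k(j)\}.$$
Property $(i)$ is immediate, since $S\subset\bigcup_j\bigcup_k\{a^{(j)}_k\}$. Property $(ii)$ holds by construction, provided we enumerate $S$ as a sequence $(a_i)$. If $S$ is finite, we simply repeat a point; this only arises if the tails are eventually constant, which is harmless provided we argue convergence as below.

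The main step is to show that any enumeration of $S$ converges to $x$. Fix $\epsilon=2^{-n}$. We claim that only finitely many points of $S$ lie outside $B(x,2^{-n})$, and we split $S$ into two parts.
\begin{itemize}
\item For $j\geq n$, every point $a^{(j)}_k$ with $k\geq k(j)=k(j,j)\geq k(j,n)$ satisfies $d(a^{(j)}_k,x)<2^{-j}\leq 2^{-n}$.
\item For $j<n$, the points $a^{(j)}_k$ with $k\geq k(j)$ lying outside $B(x,2^{-n})$ must have $k<k(j,n)$, so there are finitely many of them for each such $j$.
\end{itemize}
Since there are only finitely many indices $j<n$, $S\setminus B(x,2^{-n})$ is finite. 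Hence in any enumeration $(a_i)$ of $S$ (with multiplicity allowed), all but finitely many terms lie in $B(x,2^{-n})$, and so $a_i\to x$. If $S$ is finite, this argument shows $S\subset\{x\}\cup(\text{finite set outside every ball})$, which forces every point of $S$ to equal $x$; the constant sequence then works. In the case "moreover", where no term equals $x$, $S$ must therefore be infinite.

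For the final assertion, if no $a^{(j)}_k$ equals $x$ then $x\notin S$, so the enumeration avoids $x$. We must also make sure that $S$ is infinite so that an injective enumeration exists, and this follows from the previous paragraph: if $S$ were finite, every element of $S$ would equal $x$, contradicting $x\notin S$. In the finite index case $j\in\{1,\ldots,n\}$ the same argument applies verbatim, and is even simpler. The only real obstacle is the convergence step; it is handled by the choice $k(j)=k(j,j)$, which makes the tails of later sequences lie in ever smaller balls, while each of the finitely many earlier sequences contributes only finitely many stragglers.
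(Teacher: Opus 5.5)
Your construction is the paper's. There, $k(j)$ is the first index after which the $j$-th sequence stays in $B(x,2^{-j})$, and one takes $A=\bigcup_j\{a^{(j)}_k:k\ge k(j)\}$. Convergence then follows because only finitely many points of $A$ lie outside each $B(x,2^{-n})$, which is exactly your two-case count. When $S$ is infinite and you enumerate it injectively, your argument is complete and correct.

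The one genuine error is in your treatment of finite $S$, a case the paper passes over silently. A finite $S$ need not satisfy $S\subseteq\{x\}$. Take a single sequence with $a^{(1)}_1=y$, $d(x,y)=\frac14$, and $a^{(1)}_k=x$ for $k\ge 2$. Then $k(1)=k(1,1)=1$ and $S=\{x,y\}$. So the constant sequence $x$ does not contain the tail you selected, and (ii) fails with your $k(1)$; it could only be rescued by enlarging $k(j)$, and your stated reason is false.

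The correct observation is weaker. If $S$ is finite, each tail $(a^{(j)}_k)_{k\ge k(j)}$ is a finitely-valued sequence converging to $x$, hence eventually equal to $x$, so $x\in S$. You can then list each point of $S$ once and afterwards repeat $x$.

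Relatedly, ``any enumeration (with multiplicity allowed)'' should read ``any enumeration in which each point occurs only finitely often''. Repeating a point outside $B(x,2^{-n})$ infinitely often would destroy convergence.

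With this fix the ``moreover'' clause goes through as you intended. If no $a^{(j)}_k$ equals $x$, then $x\notin S$, so $S$ cannot be finite, and an injective enumeration of $S$ avoids $x$.
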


\begin{proof}
For each $j\in \N$ let 
$$k(j) = \min\{i: a_k^{(j)} \in B(x,2^{-j}) \text{ for all } k\geq i\}.$$
Let $A = \bigcup_{j\in \N} \{ a_k^{(j)}: k\geq k(j)\}$. Then, noting that only finitely many points of $A$ lie outside $B(x,2^{-j}) $ for each $j$, the set $A$ is a countable collection of points with  a single accumulation point $x$, so we may order the points of $A$ as a sequence $(a_i )_i$ convergent to $x$. The final part of the conclusion corresponds to the case $x\notin A$. 
\end{proof}

We may  apply Lemma \ref{ctble} to dimensions of sequences.

\begin{corollary}\label{cor2}
Let  $E \subset X$. Let $\dim^{(j)}$ be a finite or countable collection of dimension definitions, that are finitely stable and have the finite set property, and have the sequence property with a common dimension point $x\in E$. Then there is a sequence $(a_i)_i$ with $a_i \to x$ such that $\dim^{(j)}(a_i)_i = \dim^{(j)}E$ for all $j$.
\end{corollary}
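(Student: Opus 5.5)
The plan is to combine the individual sequences through Lemma~\ref{ctble}, and then control each dimension of the combined sequence from above by monotonicity and from below by the tail it contains. I read ``sequence property with a common dimension point $x$'' as meaning the following. For each $j$ with $\dim^{(j)}E>0$ there is a sequence $(a^{(j)}_k)_k\subset E$ with $a^{(j)}_k\to x$ and $\dim^{(j)}(a^{(j)}_k)_k=\dim^{(j)}E$. This is exactly what the proofs in Sections 3--6 deliver: the sequence can be taken to converge to any prescribed dimension point.

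\textbf{Step 1 (the trivial indices).} For indices $j$ with $\dim^{(j)}E=0$ nothing needs to be witnessed. Any sequence in $E$ has $\dim^{(j)}$ at least $0$ and, by monotonicity, at most $\dim^{(j)}E=0$. So these $j$ can be ignored. If every index is of this type, I will take any sequence in $E$ converging to $x$, or the constant sequence $x$, whose dimension is $0$ by the finite sets property.

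\textbf{Step 2 (combining).} For the remaining indices, I apply Lemma~\ref{ctble} to the family $(a^{(j)}_k)_k$, all of which converge to $x$. This gives a sequence $(a_i)_i$ with $a_i\to x$ satisfying two properties. First, $(a_i)\subset\bigcup_j\bigcup_k\{a^{(j)}_k\}\subset E$. Second, for each $j$ there is $k(j)$ with $(a^{(j)}_k)_{k\ge k(j)}\subset\{a_i\}$.

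\textbf{Step 3 (the two inequalities).} Fix $j$. By monotonicity, $\dim^{(j)}(a_i)\le\dim^{(j)}E$, since $\{a_i\}\subset E$. For the reverse inequality, I use monotonicity together with \eqref{findiff}(ii), which follows from finite stability and the finite sets property:
\[
\dim^{(j)}(a_i)_i\ \ge\ \dim^{(j)}(a^{(j)}_k)_{k\ge k(j)}\ =\ \dim^{(j)}(a^{(j)}_k)_{k\ge1}\ =\ \dim^{(j)}E .
\]
Combining the two inequalities gives equality for every $j$.

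\textbf{Main difficulty.} There is essentially no analytic obstacle. The only delicate point is the interpretation of the hypothesis: the individual witnessing sequences must all converge to the same point $x$. The bare sequence property only gives some limit point, so the argument relies on the remark in each section that the limit can be chosen to be any dimension point. Truncating to tails and the countable union in Lemma~\ref{ctble} are what allow countably many dimensions to be handled at once.
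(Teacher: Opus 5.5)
Your proposal is correct and follows the paper's proof essentially verbatim. Like the paper, you combine the witnessing sequences (all converging to $x$) via Lemma~\ref{ctble}, bound $\dim^{(j)}(a_i)$ above by monotonicity, and bound it below by the retained tail using \eqref{findiff}(ii). Your separate handling of indices with $\dim^{(j)}E=0$, where the sequence property as defined supplies no sequence, is a small piece of extra care that the paper's proof leaves implicit.
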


\begin{proof}
For each $j$ there is a sequence $(a^{(j)}_k)_{k}\subset E$ with $a^{(j)}_k \to x$ and $\dim^{(j)}(a_k^{(j)})_k = \dim^{(j)}E $. By Lemma \ref{ctble} there is a sequence $(a_i)_{i} \subset E$ with $a_i\to x$ and numbers $k(j) \in \N$ such that $(a^{(j)}_k)_{k\geq k(j)}\subset \{a_i\}_{i}$. Then for each $j$
$$\dim^{(j)} E\geq \dim^{(j)} (a_i)_{i} \geq  \dim^{(j)} (a^{(j)}_k)_{k\geq k(j)} = \dim^{(j)} (a^{(j)}_k)_{k\geq 1} = \dim^{(j)} E,$$
since, by finite stability and the finite set property, removing a finite number of points from a sequence does not change its dimension.
\end{proof}

For many well-known fractals, every point of $E \subset X$  is a dimension point for some or all of the dimensions considered in Sections 3-6, allowing Corollary \ref{cor2} to be applied. In particular, this will be the case if $E$ is {\it dimension homogeneous} in the sense that every ball $B(x,r)$ with $x\in E$ and $r>0$ contains a bi-Lipschitz image of $E$, so that $\dim (E\cap B(x,r)) = \dim E$ making every point of $E$  a dimension point for a wide range of dimension definitions. In particular, this is the case where $E$ is the attractor of an iterated function system (IFS) of bi-Lipschitz contractions on $\R^n$, including  self-similar, self-affine and many other fractal attractors of IFSs or graph-directed IFSs. We illustrate this with two examples relating to the intermediate dimension and dimension profile dimension homogeneous sets.

\begin{corollary}\label{intsq}
Let $E \subset \R^n$  be a dimension homogeneous compact set and let $x\in E$. 

(i) (Intermediate dimensions) There is a sequence $(a_i)_{i} \subset E$ such that $a_i \to x$ and $\overline{\dim}_\theta (a_i)_{i} = \overline{\dim}_\theta E$ for all $\theta \in (0,1]$.

(ii) (Assouad spectrum) There is a sequence $(a_i)_{i} \subset E$ such that $a_i \to x$ and $\dim_A^{\vartheta} (a_i)_{i} = \dim_A^{\vartheta} E$ for all $\vartheta \in (0,1)$.
\end{corollary}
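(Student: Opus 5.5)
The plan is to reduce the uncountable family of parameters to a countable one, apply Corollary \ref{cor2} to that countable family, and then use continuity in the parameter to recover every parameter value.

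Since $E$ is dimension homogeneous, every $x\in E$ is a dimension point for each $\overline{\dim}_\theta$ ($\theta\in(0,1]$) and each $\dim_A^\vartheta$ ($\vartheta\in(0,1)$). The reason is that $E\cap B(x,r)$ contains a bi-Lipschitz image of $E$, and these dimensions are bi-Lipschitz invariant and monotone. Each is finitely stable and has the sequence property: Theorem \ref{seqpropbd} covers $\theta=1$, Theorem \ref{imdimpf} covers $\theta\in(0,1)$ in $\R^n$, and Theorem \ref{secpropas} covers the Assouad spectrum. Moreover, the proofs show that the witnessing sequence may be taken to converge to any prescribed dimension point. We may assume each relevant dimension is positive and finite; in $\R^n$ all values are at most $n$. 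If some value is zero, any sequence converging to $x$ works for that parameter, or we can simply include it trivially.

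Take a countable dense set of parameters, say $\Theta=\mathbb{Q}\cap(0,1]$ for (i) and $\mathbb{Q}\cap(0,1)$ for (ii). Corollary \ref{cor2} then gives a single sequence $(a_i)\subset E$ with $a_i\to x$ such that $\overline{\dim}_\theta(a_i)=\overline{\dim}_\theta E$ for all $\theta\in\Theta$, and similarly for the spectrum.

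The main step is extending from $\Theta$ to all parameters. For (i), $\theta\mapsto\overline{\dim}_\theta F$ is continuous on $(0,1]$ for every bounded $F\subset\R^n$, which is quoted in Section 5. This applies to both $F=(a_i)$ and $F=E$, so two continuous functions agreeing on a dense subset of $(0,1]$ agree everywhere. For (ii), we use that the Assouad spectrum $\vartheta\mapsto\dim_A^\vartheta F$ is continuous on $(0,1)$ for any bounded $F$, as recalled in Section 4 from \cite{Fra,FY}; the same density argument then concludes. The only point needing care is that the continuity results hold for arbitrary bounded sets, including countable ones like $(a_i)$. They do, since they are stated for general bounded sets.

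If one prefers to avoid continuity for the sequence itself, monotonicity gives a fallback in (i). Since $\theta\mapsto\overline{\dim}_\theta(a_i)$ is non-decreasing, for rational $q<\theta$ we get $\overline{\dim}_\theta(a_i)\ge\overline{\dim}_q E$. Letting $q\nearrow\theta$ and using continuity of $\theta\mapsto\overline{\dim}_\theta E$ yields $\overline{\dim}_\theta(a_i)\ge\overline{\dim}_\theta E$. The reverse inequality follows from monotonicity in the set.
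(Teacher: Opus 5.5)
Your proposal is correct and follows the paper's proof: take a countable dense set of parameters, apply Corollary \ref{cor2} at the common dimension point $x$, then extend to all $\theta$ (resp.\ $\vartheta$) by continuity of the dimension in the parameter. Your extra remarks fill in details the paper leaves implicit, and they are sound: $\theta=1$ via Theorem \ref{seqpropbd}, the zero-dimension case, witnessing sequences converging to a prescribed dimension point, and the monotonicity fallback, which you rightly restrict to (i).
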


\begin{proof}
Let $\{\theta_j\}_j$ be a countable dense subset of $(0,1]$. By Theorem \ref{imdimpf}, for each $j$ there exists a sequence $(a^{(j)}_k)_{k}\subset E$ with $a^{(j)}_k \to x$ and $\overline{\dim}_{\theta_j }(a^{(j)}_k) = \overline{\dim}_{\theta_j } E$. By Corollary \ref{cor2} there is a sequence $(a_i)_{i}$ with $a_i \to x$ such that 
\be\label{thetaj}
\overline{\dim}_{\theta_j} (a_i) = \overline{\dim}_{\theta_j }E
\ee 
for all $j$. But $\overline{\dim}_{\theta}F$ is continuous in $\theta$ for bounded $F\subset \R^n$, see \cite{FFK}, so \eqref{thetaj} extends by continuity to all $\theta\in (0,1]$.

The argument for the Assouad spectrum is similar.
\end{proof}

In Corollary \ref{intsq} we may take  $E$ to be a Bedford-McMullen self-affine carpet, see \cite{Fra4}. The intermediate dimension function $\theta\mapsto  \overline{\dim}_\theta E$ of such carpets is extraordinarily complicated \cite{BK}. This corollary demonstrates that such complexity of intermediate dimensions can be realised by a single convergent sequence.

We can obtain a similar conclusion for dimension profiles and the dimensions of projections of sets.

\begin{corollary}{\rm (Dimension profiles)}\label{eg4}
Let $E \subset \R^n$  be a dimension homogeneous compact set and let $x\in E$. Then there is a sequence $(a_i)_{i} \subset E$ such that $a_i \to x$ and $\overline{\dim}^s(a_i)_{i} = \overline{\dim}^s E$ for all $ 0<s\leq n$. In particular, for all integers $1\leq m \leq n$,  $\dub\pi_V (a_i) = \dub \pi_V E =  \dub^m E$ for almost all $V\in G(n,m)$, with $\dub^n (a_i) = \dub E$.
\end{corollary}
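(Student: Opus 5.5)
The argument follows Corollary \ref{intsq}. Since $E$ is dimension homogeneous, every ball $B(x,r)$ with $r>0$ contains a bi-Lipschitz image of $E$. The profiles $\dub^{\,s}$ are monotone, and they are stable under bi-Lipschitz maps up to the equivalences used in \cite{FalCap,Fal4}. Hence $\dub^{\,s}(E\cap B(x,r))=\dub^{\,s}E$ for every $s>0$ and $r>0$, so the given $x$ is a common dimension point for all the profiles. We already know that each $\dub^{\,s}$ is finitely stable, has the finite sets property, and has the sequence property, and that the sequence can be taken to converge to any prescribed dimension point.

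Fix a countable dense set $\{s_j\}_j\subset(0,n]$ with $n$ among the $s_j$. For each $j$ the sequence-property theorem for dimension profiles gives a sequence $(a^{(j)}_k)_k\subset E$ with $a^{(j)}_k\to x$ and $\dub^{\,s_j}(a^{(j)}_k)=\dub^{\,s_j}E$. Corollary \ref{cor2} then yields a single sequence $(a_i)\subset E$ with $a_i\to x$ and $\dub^{\,s_j}(a_i)=\dub^{\,s_j}E$ for all $j$.

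The main step is to pass from the dense set $\{s_j\}$ to all $s\in(0,n]$. For this I would use continuity of $s\mapsto\dub^{\,s}F$ for any bounded $F$. This follows from kernel comparisons: for $s\le s'$ and $r<1$ we have $\phirs{r}{s'}\le\phirs{r}{s}$, so $\dub^{\,s}F\le\dub^{\,s'}F$. In the other direction, the bounds from \cite{Fal4} give $\dub^{\,s'}F\le \dub^{\,s}F/\bigl(1+(1/s-1/s')\dub^{\,s}F\bigr)$, or a Lipschitz-type estimate of the form $0\le \dub^{\,s'}F-\dub^{\,s}F\le C(s'-s)$. If that continuity result is not available in the needed form, I would prove it directly by comparing energies. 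One route writes $\phirs{r}{s'}$ in terms of $\phirs{\rho}{s}$ at a slightly different scale $\rho=r^{1+\epsilon}$. Another uses the estimate $\Cs{r}{s}(F)\approx$ a weighted sum of $N_\rho(F)$ over $\rho\ge r$, which shows the exponent varies continuously in $s$. This continuity step is where I expect most of the work. Given continuity for both $F=E$ and $F=(a_i)$, equality on the dense set extends to all $0<s\le n$.

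Finally, the projection statements follow from the Proposition in Section \ref{dimprof}. For integers $1\le m<n$ it gives $\dub\pi_V(a_i)=\dub^{\,m}(a_i)=\dub^{\,m}E=\dub\pi_V E$ for almost all $V\in G(n,m)$; this requires the Proposition for the non-compact set $(a_i)$, so I would apply it to the compact set $(a_i)\cup\{x\}$ and use the finite sets property. For $m=n$ it gives $\dub^{\,n}(a_i)=\dub(a_i)$, and $\dub(a_i)=\dub E$ by the $s=n$ case.
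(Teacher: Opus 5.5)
Your proposal is correct and follows the route the paper intends: a countable dense set of $s_j$, the sequence theorem for profiles at the common dimension point $x$, Corollary \ref{cor2}, then continuity in $s$ and the projection Proposition applied to $(a_i)\cup\{x\}$. One slip: the first bound you quote has the wrong sign and, as written, would force $\dub^{\,s'}F\le\dub^{\,s}F$; the correct form is $\dub^{\,s'}F\le \dub^{\,s}F/\bigl(1-(1/s-1/s')\dub^{\,s}F\bigr)$, which together with $\dub^{\,s}F\le s$ gives exactly your alternative estimate $0\le\dub^{\,s'}F-\dub^{\,s}F\le s'-s$, so the continuity step goes through.
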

\begin{proof}
The proof is similar to that of  Corollary \ref{intsq}
\end{proof}

Note that for sets and dimensions without a common dimension point it may still be possible to find a countable subset having the same dimensions as the set. For example in the case of intermediate dimensions, given a compact $E \subset \R^d$, for a countable dense set $\theta_j \in (0,1]$, there exists some convergent sequence $(a^{(j)}_k)\subset E$ with 
\be\label{denseeq}
\overline{\dim}_{\theta_j }(a^{(j)}_k) = \overline{\dim}_{\theta_j } E
\ee
 where $\overline{\dim}_{\theta}$ are the intermediate dimensions. Then $P=\bigcup_{j\in \N}\bigcup_{k\in \N} \{a^{(j)}_k\}$ is a countable set containing every sequence $(a^{(j)}_k)$. Moreover, for all $\theta \in (0,1]$ there are $\theta_j \to \theta$, so by continuity of  $\overline{\dim}_{\theta} E$ and $\overline{\dim}_{\theta} P $ \eqref{denseeq} extends to all $\theta\in (0,1].$
 \bigskip
 
\begin{center}{\small ACKNOWLEDGEMENT}
\end{center}
The authors thank Amlan Banaji for helpful comments relating to this work.

\begin{flushleft}
\begin{small}
\medskip

Kenneth Falconer

Mathematical Institute, University of St~Andrews, St~Andrews, Fife, KY16~9SS, UK

{\texttt kjf@st-andrews.ac.uk}
\medskip

Yuyang Liu

Mathematical Institute, University of St~Andrews, St~Andrews, Fife, KY16~9SS, UK

{\texttt cw269@st-andrews.ac.uk}
\end{small}
\end{flushleft}

\end{document}